\documentclass[11pt,english,online]{smfart}

\usepackage[T1]{fontenc}
\usepackage[francais]{babel}

\usepackage{amssymb,url,xspace,smfthm,amsmath,amsfonts,enumitem}
\usepackage{tikz}
\usetikzlibrary{matrix,positioning}

\DeclareFontFamily{U}{wncy}{}
\DeclareFontShape{U}{wncy}{m}{n}{
<5>wncyr5
<6>wncyr6
<7>wncyr7
<8>wncyr8
<9>wncyr9
<10>wncyr10
<11>wncyr10
<12>wncyr6
<14>wncyr7
<17>wncyr8
<20>wncyr10
<25>wncyr10}{}
\DeclareMathAlphabet{\cyr}{U}{wncy}{m}{n}

\newcommand{\BibTeX}{{\scshape Bib}\kern-.08em\TeX}
\newcommand{\T}{\S\kern .15em\relax }
\newcommand{\AMS}{$\mathcal{A}$\kern-.1667em\lower.5ex\hbox
        {$\mathcal{M}$}\kern-.125em$\mathcal{S}$}

\setlength{\parskip}{5pt}
\tolerance 400
\pretolerance 200
\usepackage{tikz}
\usetikzlibrary{matrix}

\title[Log good reduction, monodromy and the rational volume]{Logarithmic good reduction, \\ monodromy and the rational volume}
\date {October 2016}
\author{Arne Smeets}

\address{Radboud Universiteit Nijmegen, IMAPP, Heyendaalseweg 135, 6525 AJ Nijmegen, The Netherlands \emph{and} University of Leuven, Departement Wiskunde, Celestijnenlaan 200B, 3001 Heverlee, Belgium} 
\email{arnesmeets@gmail.com}
\urladdr{}
\keywords{\'Etale cohomology, logarithmic geometry, monodromy, nearby cycles, rational points.}
\begin{document}
\maketitle
\begin{abstract} Let $R$ be a strictly local ring complete for a discrete valuation, with fraction field $K$ and residue field of characteristic $p > 0$. Let $X$ be a smooth, proper variety over $K$. Nicaise conjectured that the rational volume of $X$ is equal to the trace of the tame monodromy operator on $\ell$-adic cohomology if $X$ is cohomologically tame. He proved this equality if $X$ is a curve. We study his conjecture from the point of view of logarithmic geometry, and prove it for a class of varieties in any dimension: those having logarithmic good reduction. 
\end{abstract}
\tableofcontents
\section{Introduction}
Let $R$ be a strictly local ring complete for a discrete valuation, with fraction field $K$. Assume that the residue field $k$ of $R$ is algebraically closed, of characteristic $p > 0$. Fix a prime number $\ell \neq p$. Let $K^s$ be a separable closure of $K$, and let $K^t$ be the tame closure of $K$ inside $K^s$. Let $P = \mathrm{Gal}(K^s/K^t)$ be the wild inertia group.

Let $\varphi$ be a topological generator of the tame inertia group $I^t = \mathrm{Gal}(K^t/K)$, which is procyclic; we will refer to $\varphi$ as the \emph{tame monodromy operator}.

Let $X$ be a proper, smooth $K$-variety. One cannot simply ``count'' the number of rational points on $X$, but one can use other measures for the number of rational points on $X$. One such measure, the \emph{rational volume}, can be constructed using a so-called \emph{weak N\'eron model} of $X$, of which we recall the definition:

\begin{defi} \label{defi:weakneron} A \emph{weak N\'eron model} for $X$ is a smooth, separated $R$-scheme of finite type $\mathcal{X}$, endowed with an isomorphism $\mathcal{X} \times_R K \cong X$, such that the natural map $\mathcal{X}(R) \to \mathcal{X}(K) = X(K)$ is a bijection.  \end{defi} Such a model always exists: one can simply take the smooth locus of a N\'eron smoothening of any proper $R$-model of $X$, cfr. \cite[Theorem 3.1.3]{blr}.  \begin{defi} \label{defi:rationalvolume} The \emph{rational volume} $\mathrm{s}(X)$ of $X$ is the $\ell$-adic Euler characteristic of the special fibre of a weak N\'eron model for $X$. \end{defi}

That the integer $\mathrm{s}(X)$ does not depend on the choice of a weak N\'eron model is a deep result; the only known proof for this fact uses the theory of motivic integration. For more details, we refer to the foundational paper of Loeser--Sebag \cite{LS}, and subsequent work by Nicaise--Sebag \cite{NicaiseSebag}, Nicaise \cite{nic1} and Esnault--Nicaise \cite[\S 3]{EN} on the \emph{motivic Serre invariant}. This is the class in $K_0^R(\mathrm{Var}_k)/(\mathbb{L} - 1)$ of the special fibre of a weak N\'eron model; here $K_0^R(\mathrm{Var}_k)$ denotes a ``modified'' Grothendieck ring of varieties over $k$ (see \cite[\S 2]{EN} for the precise definition), and $\mathbb{L}$ is the class of $\mathbb{A}^1_k$ in this ring. The motivic Serre invariant is already independent of the choice of such a model (see e.g. \cite[Theorem 3.6]{EN}), and hence so is its realization $\mathrm{s}(X)$.

One has $\mathrm{s}(X) = 0$ if $X(K) = \emptyset$, since then $X$ (viewed as an $R$-scheme) is a weak N\'eron model for itself. Nicaise asked in \cite[\S 6]{nic1} whether the rational volume has a cohomological interpretation similar to the Grothendieck--Lefschetz formula: if the variety $X$ is cohomologically tame, i.e. if the wild inertia subgroup $P$ acts trivially on the \'etale cohomology groups $H^i(X \times_K K^s,\mathbf{Q}_\ell)$, and if moreover $X(K^t) \neq \emptyset$, do we have the equality \begin{equation} \label{traceformulaz} \mathrm{s}(X) = \sum_{i \geq 0} (-1)^i \mathrm{Tr}\left(\varphi \mid H^i\left(X \times_K K^t,\mathbf{Q}_\ell\right)\right)\ ? \end{equation}

The left-hand side of this formula is defined from the special fibre of an integral model of $X$, whereas the right-hand side (which is a priori an element of $\mathbf{Q}_\ell$) involves only the generic fibre. In their groundbreaking paper \cite{NicaiseSebag}, Nicaise--Sebag proved a formula of this type for non-archimedean analytic spaces; their formula is an arithmetic analogue of the one obtained by Denef--Loeser in \cite{denefloeser}. 

Subsequently, Nicaise proved the equality (\ref{traceformulaz}) in equal characteristic zero \cite[\S 6]{nic1}. However, the situation becomes much more subtle if the residual characteristic is positive: Nicaise proved the equality (\ref{traceformulaz}) if $X$ is a curve \cite[\S 7]{nic1}, and Halle--Nicaise handled the case of a semi-abelian variety \cite[Chapter 8]{halnic}. 

In this paper, we study the conjectural formula (\ref{traceformulaz}) in the framework of logarithmic geometry in order to prove it for a large class of varieties in any dimension, those with \emph{log good reduction}. Our main result can be stated as follows:

\begin{theo} \label{maintheorem} Let $X$ be a proper, smooth $K$-variety. Assume that there exists a flat, proper $R$-model $\mathcal{X}$ of $X$ such that $\mathcal{X}^\dagger$ is log smooth over $R^\dagger$. Then (\ref{traceformulaz}) holds, i.e. $$\label{traceformula} \mathrm{s}(X) = \sum_{i \geq 0} (-1)^i \mathrm{Tr}\left(\varphi \mid H^i\left(X \times_K K^t,\mathbf{Q}_\ell\right)\right).$$  \end{theo} Here the log scheme $\mathcal{X}^\dagger$ stands for the model $\mathcal{X}$ equipped with the natural log structure, i.e. the divisorial log structure induced by the special fibre $\mathcal{X}_s$, and $R^\dagger$ is the ``log ring'' given by the inclusion $R \setminus \{0\} \hookrightarrow R$.  We want to stress that the underlying scheme $\mathcal{X}$ may very well be singular, even if $\mathcal{X}^\dagger$ is log smooth.

By work of Nakayama \cite[Corollary 0.1.1]{nakayama}, varieties with log good reduction are automatically cohomologically tame, i.e. they satisfy the major hypothesis in Nicaise's conjecture. Nicaise also assumed the existence of a $K^t$-point in the statement of his conjecture, but we will not need this assumption: our method works for all varieties with log good reduction, with or without a $K^t$-point. Nevertheless, it would be interesting to know whether the log good reduction hypothesis implies the existence of a $K^t$-point in general, and as far as we know, this question is open.

Conversely, it is known in some cases that the cohomological tameness assumption implies the existence of a proper, log smooth model. Indeed, if $X$ is a cohomologically tame curve of genus at least $2$, then $X$ has log good reduction, by work of T. Saito \cite{saito,saito2} and Stix \cite[Theorem 1.2]{stix}. Hence we recover Nicaise's results for curves \cite[\S 7]{nic1}. Moreover, in the recent preprint \cite{BS}, A. Bellardini and the author managed to prove the existence of a proper log smooth model for any cohomologically tame abelian variety. Hence our Theorem \ref{maintheorem} also implies the result of Halle--Nicaise on the validity of the trace formula (\ref{traceformulaz}) for abelian varieties \cite[Chapter 8]{halnic}.

Our proof is very much inspired by the strategy used by Nicaise--Sebag in \cite{NicaiseSebag} and Nicaise in \cite{nic1}; however, the technicalities become much more complicated, and a new idea will be needed to finish the proof. Let $\mathcal{X}$ be a model of $X$ over $R$ such that the log scheme $\mathcal{X}^\dag$ is \emph{log regular} in the sense of Kato \cite{kato}. One can associate with $\mathcal{X}^\dagger$ its \emph{fan} $F(\mathcal{X})$, which is a finite monoidal space. There exists a morphism of monoidal spaces $\pi: (\mathcal{X},\mathcal{M}_X^\sharp) \to F(\mathcal{X})$ which we call the \emph{characteristic morphism}; here $\mathcal{M}_X$ is the sheaf of monoids defining the log structure on $\mathcal{X}^\dagger$, and $\mathcal{M}_X^\sharp = \mathcal{M}_X/\mathcal{O}_X^\times$. With each $x \in F(\mathcal{X})$, one then associates the locally closed subset $\pi^{-1}(x) = U(x)$ of $\mathcal{X}$. These sets give the so-called \emph{logarithmic stratification} $(U(x))_{x \in F(\mathcal{X})}$ of $\mathcal{X}$. 

Inspired by the computations on strict normal crossings models in \cite{NicaiseSebag} and \cite{nic2}, we will give explicit formulae for both the trace of the tame monodromy operator and the rational volume in terms of the logarithmic stratification. The trace of the monodromy operator can be computed using the logarithmic description of the sheaves of tame nearby cycles, first given by Nakayama in the log smooth case  \cite{nakayama} and later generalized by Vidal \cite{vidal}. We obtain \begin{equation} \label{tracemon} \sum_{i \geq 0} (-1)^i\mathrm{Tr}\left(\varphi^d \mid H^i\left(X \times_K K^t, \mathbf{Q}_\ell\right)\right) = \sum_{\substack{x \in F(\mathcal{X})^{(1)}  \\ s(x)' \mid d}}  s(x)' \chi(U(x)),\end{equation} where $F(X)^{(1)}$ denotes the set of height $1$ points in the fan $F(\mathcal{X})$, $s(x) \in \mathbf{N}$ is an integer which can be read off from the log structure and $s(x)'$ denotes the biggest prime-to-$p$ divisor of $s(x)$. The rational volume can be computed using the formalism of log blow-ups developed by Kato \cite[\S 10]{kato} (see also Niziol's work \cite{niziol}). This gives \begin{equation} \label{ratvol} \mathrm{s}(X) = \sum_{\substack{x \in F(\mathcal{X})^{(1)} \\ s(x) = 1}} \chi(U(x)). \end{equation} Using the equalities (\ref{tracemon}) and (\ref{ratvol}), one finds \begin{equation} \label{epsilon} \sum_{i \geq 0} (-1)^i \mathrm{Tr}(\varphi \mid H^i(X \times_K K^t,\mathbf{Q}_\ell)) - \mathrm{s}(X) = \sum_{\substack{x \in F(\mathcal{X})^{(1)} \\ s(x) = p^r,\ r \geq 1}} \chi(U(x)). \end{equation}

The result then follows from the fact that each term $\chi(U(x))$ in the right-hand side of the equality (\ref{epsilon}) vanishes. In fact, we prove the more general result that whenever $x \in F(\mathcal{X})^{(1)}$ is a point for which $p$ divides $s(x)$, then $\chi(U(x)) = 0$. This is perhaps the principal novelty of this paper. It is only at this point that the logarithmic smoothness assumption needs to be invoked; the previous steps only require the (significantly weaker) assumption that there exists a log regular model. The key observation involves the sheaves of logarithmic $1$-forms on the relevant strata.

The paper is organized as follows. For the convenience of the reader, we will recall a few important notions from logarithmic geometry in \S 2. In \S 3, we prove some technical results on sheaves of tame nearby cycles in the logarithmic setting; we use these to compute the tame monodromy zeta function and to prove the equality (\ref{tracemon}). In \S 4, we use resolution of toric singularities \`a la Kato--Niziol to prove the formula (\ref{ratvol}). In \S 5, we obtain the crucial new ingredient needed to finish the proof. 

\subsection*{Acknowledgements} Most of the results in this paper were contained in the author's PhD thesis, written in Leuven and Orsay. I warmly thank Johannes Nicaise for proposing me to work on this beautiful subject (which owes a lot to his ideas) and for his help and encouragement. Special thanks are due to Takeshi Saito for a beautiful suggestion which led me to the argument given in \S 5. Moreover, I would like to thank Dan Abramovich, Alberto Bellardini, Emmanuel Bultot, H\'el\`ene Esnault, Jakob Stix, Wim Veys and Olivier Wittenberg for useful suggestions and discussions. The author was supported by FWO Vlaanderen and the European Research Council's FP7 programme under ERC Grant Agreement nr. 615722 (MOTMELSUM).

\section{Preliminaries on logarithmic geometry}

For basic definitions and properties of monoids and logarithmic schemes, we refer to Kato's foundational text \cite{katof} and to the detailed treatments given by Ogus \cite{ogus} and Gabber--Ramero \cite[Chapter 9]{GR}. In this section, we briefly recall a few notions which will play an important role in this paper.

\subsection{Notation} We will denote a log scheme by $(X,\mathcal{M}_X)$, where $X$ is the underlying scheme and $\mathcal{M}_X$ is the sheaf of monoids defining the log structure on $X$, endowed with a morphism of sheaves $\alpha_X: \mathcal{M}_X \to \mathcal{O}_X$.  One can define log structures both on Zariski sites and \'etale sites; to keep things simple, we will work with Zariski log structures throughout, since these are sufficient for our purposes. 

Given a monoid $P$, we write $P^\sharp$ for the associated sharp monoid $P/P^\times$, $P^{\mathrm{gp}}$ for the group envelope of $P$ and $P^{\mathrm{sat}}$ for the saturation of $P$ in $P^{\mathrm{gp}}$.

\subsection{Divisorial log structures} Let $X$ be a locally Noetherian scheme and let $D \hookrightarrow X$ be a divisor on $X$. Let $j: U \hookrightarrow X$ be the corresponding open immersion, where $U = X \setminus D$. Then $$\mathcal{M}_X = \mathcal{O}_X \cap j_\star \mathcal{O}_U^\times \hookrightarrow \mathcal{O}_X$$ defines a log structure on $X$, the \emph{divisorial log structure} induced by $D$. 

A special case of this construction is the following. Let $S$ be a trait, i.e. $S = \mathrm{Spec}\,R$ where $R$ is a discrete valuation ring. Let $\pi$ be a uniformizer. Then $R  \setminus \{0\} \hookrightarrow R$ defines a log structure on $R$, \emph{the standard log structure}, which is exactly the divisorial log structure induced by the divisor given by $\pi = 0$.  We will denote the corresponding log scheme by $S^\dagger$. If $X$ is a flat $R$-scheme, then the special fibre $X_s$ is a divisor; we denote by $X^\dagger$ the log scheme obtained by equipping $X$ with the divisorial log structure induced by $X_s$. This yields a well-defined morphism $X^\dagger \to S^\dagger$ of log schemes. (Sometimes we will simply write $R^\dagger$ instead of $S^\dagger$.)

\subsection{Fibre products}
Fibre products in the category $\mathsf{Log}^{\mathsf{fs}}$ of fs log schemes will be denoted by $\times^{\mathrm{fs}}$. If $S = \mathrm{Spec}\,R$ is a trait and $\pi \in R$ a uniformizer, we define for $d \in \mathbf{Z}_{>0}$ the scheme $S(d) = \mathrm{Spec}\,R[T]/(T^d - \pi)$. Then $u_d: \mathbf{N} \hookrightarrow \frac{1}{d}\mathbf{N}$ gives a chart for the morphism $S(d)^\dagger \to S^\dagger$. Given any log scheme $X^\dagger$ over $S^\dagger$, define \begin{equation} \label{X(d)} X(d)^\dagger = X^\dagger \times_{S^\dagger}^{\mathsf{fs}} S(d)^\dagger. \end{equation} Recall that fibre products in $\mathsf{Log}^{\mathsf{fs}}$ do not commute with the forgetful functor from $\mathsf{Log}^{\mathsf{fs}}$ to the category of schemes. A simple example illustrating this phenomenon is the following: the underlying scheme of $S(d)^\dagger \times_{S^\dagger}^{\mathsf{fs}} S(d)^\dagger$ is $S(d) \times \mathrm{Spec}\,\mathbf{Z}[\mathbf{Z}/d\mathbf{Z}],$ which is (geometrically) a disjoint union of $d$ copies of $S(d)$; on the other hand, the fibre product of schemes $S(d) \times_S S(d)$ is not normal.

\subsection{Log regularity and log smoothness} Kato introduced the notion of \emph{log regularity} in \cite[\S 2]{kato}. We will recall its definition for the convenience of the reader.
\begin{defi} \label{logregular}
Let $(X,\mathcal{M}_X)$ be an fs log scheme. Given any point $x$ of $X$, let $I(x,\mathcal{M}_X)$ be the ideal of $\mathcal{O}_{X,x}$ generated by  $\mathcal{M}_{X,x}^+ = \mathcal{M}_{X,x} \setminus \mathcal{O}_{X,x}^{\times}$. Let $C_{X,x}$ the closed subscheme of $\mathrm{Spec}\,\mathcal{O}_{X,x}$ defined by $I(x,\mathcal{M}_X)$. Then $(X,\mathcal{M}_X)$ is \emph{log regular} at $x$ if $C_{X,x}$ is regular (as a scheme) and $$\dim \mathcal{O}_{X,x} = \dim C_{X,x} + \text{rank}_{\mathbf{Z}} (\mathcal{M}_{X,x}^\sharp)^\text{gp}.$$ A \emph{log regular scheme} is a log scheme which is everywhere log regular. Whenever we say/assume that a log scheme is log regular, this implies that the log structure is fs.
\end{defi}

Let us also recall the notion of a log smooth morphism of log schemes. Developing this notion was in some sense the main motivation for the theory of logarithmic geometry; it allows one to treat certain singular objects as if they were smooth.

\begin{defi} \label{logsmooth} Let $f: (X,\mathcal{M}_X) \to (Y,\mathcal{M}_Y)$ be a morphism of fs log schemes. Then $f$ is \emph{log smooth} (resp. \emph{log \'etale}) if \'etale locally on $X$ and $Y$, there exists a chart for $f$, given by maps $P_X \to \mathcal{O}_X$, $Q_Y \to \mathcal{O}_Y$ and $u: Q \to P$, such that the kernel and the torsion part of the cokernel (resp. the kernel and cokernel) of $u^\text{gp}: Q^{\text{gp}} \to P^{\text{gp}}$ are finite groups of order invertible on $X$, and the map $X \to Y \times_{\mathrm{Spec}\, \mathbf{Z}[Q]} \mathrm{Spec}\,\mathbf{Z}[P]$ is classically smooth (at the level of the underlying schemes). \end{defi}

Recall that a log smooth log scheme over a log regular scheme is again log regular, and that log regular schemes are normal and Cohen-Macaulay.

\subsection{The logarithmic stratification} The \emph{characteristic sheaf} of a log scheme $(X,\mathcal{M}_X)$ is the quotient $\mathcal{M}_X^\sharp = \mathcal{M}_X/\mathcal{O}_X^\times.$ With a log regular scheme, one can associate a finite monoidal space, the \emph{fan} $F(X)$, as follows: as a set, $$F(X) = \left\{x \in X \mid I(x,\mathcal{M}_X) = \mathfrak{m}_x\right\},$$ where $\mathfrak{m}_x$ denotes the maximal ideal in $\mathcal{O}_{X,x}$. Endow the set $F(X)$ with the topology induced by the topology on $X$ and with the inverse image of the sheaf $\mathcal{M}_X^\sharp$. This is a fan, as proved by Kato in \cite[Proposition 10.1]{kato}. If $X$ is quasi-compact, then $F(X)$ is easily seen to be a \emph{finite} monoidal space. 

Given a log scheme $(X,\mathcal{M}_X)$, one often considers the \emph{rank stratification of $X$}, constructed starting from the characteristic sheaf: denote $$X_i = \{x \in X: \mathrm{rank}_{\mathbf{Z}}\,(\mathcal{M}_{X,x}^\sharp)^\mathrm{gp} \geq i\}.$$ The sets $X_i \setminus X_{i - 1}$ are locally closed and give the rank stratification of $X$. In this paper, we use a finer stratification obtained from the fan of a log regular scheme $(X,\mathcal{M}_X)$. In \cite[\S 10.2]{kato}, Kato defines the \emph{characteristic morphism}: let $\pi: (X,\mathcal{M}_X^\sharp) \to F(X)$ map a point $x \in X$ to the point $\pi(x)$ of $F(X) \subseteq X$ which corresponds to the prime ideal $I(x,\mathcal{M}_X)$ of $\mathcal{O}_{X,x}$. This is an open morphism of monoidal spaces. Given $x \in F(X)$, let $U(x) := \pi^{-1}(x)$: this is a locally closed subset of $X$. This gives the  \emph{logarithmic stratification} $(U(x))_{x \in F(X)}$ of $X$. 

When endowed with the reduced scheme structure, each stratum is irreducible and regular \cite[Corollary 9.5.52(iii)]{GR}. For each $x \in F(X)$, we will denote by $\overline{U(x)}$ the Zariski closure of the stratum $U(x)$ inside $X$, again equipped with the reduced scheme structure. The \emph{height} $h(x)$ of a point $x$ in $F(X)$ is defined as the dimension of the monoid $\mathcal{M}_{F(X),x}$. The set of points of $F(X)$ of height equal to $i$ will be denoted by $F(X)^{(i)}$. We have the equality of locally closed, reduced subschemes $$X_i \setminus X_{i - 1} = \bigsqcup_{x \in F(X)^{(i)}} U(x).$$

\section{Nearby cycles and the monodromy zeta function} \label{sec:nearbycycles}

The main goal of this section is to calculate the tame monodromy zeta function of a smooth, proper $K$-variety $X$, starting from a proper $R$-model $\mathcal{X}$ such that $\mathcal{X}^\dagger$ is log regular. To do so, we use the ``logarithmic'' description of the sheaves of tame nearby cycles, first given by Nakayama \cite{nakayama} and later generalized by Vidal \cite{vidal}; along the way, we prove some technical results on nearby cycles.

Let us first introduce some easy terminology:

\begin{defi} A morphism of log schemes $f: (X,\mathcal{M}_X) \to (Y,\mathcal{M}_Y)$ is \emph{vertical} if for every $x \in X$, the induced homomorphism $\mathcal{M}_{Y,f(x)} \to \mathcal{M}_{X,x}$ is vertical; this means that the image of $\mathcal{M}_{Y,f(x)}$ is not contained in any proper face of $\mathcal{M}_{X,x}$. \end{defi}

A log scheme $(\mathcal{X},\mathcal{M}_\mathcal{X})$ is vertical over $R^\dagger$ if and only if $X = \mathcal{X} \times_R K$ is precisely the locus of triviality of the log structure. In particular, if $\mathcal{X}$ is a flat $R$-scheme, then $\mathcal{X}^\dagger$ is vertical over $R^\dagger$. We will then simply say that ``the log structure is vertical''.

\begin{defi} \label{saturationindex} Let $(\mathcal{X},\mathcal{M}_{\mathcal{X}})$ be a vertical log scheme over $R^\dagger$ which is log regular. Let $x \in F(\mathcal{X})^{(1)}$ be a height $1$ point in the fan of $\mathcal{X}$. Let $\mathbf{N} \to P$ be a chart for the log structure around $x$, where $P$ is an fs monoid. Then $x$ corresponds to a height $1$ prime ideal $\mathfrak{p}$ in $P$. Now $\mathbf{N} \to P$ induces a homomorphism $\mathbf{N} \to P/(P \setminus \mathfrak{p}) \cong \mathbf{N}$, which is multiplication by a positive integer $s(x)$. We will call this integer the \emph{saturation index} of $x \in F(\mathcal{X})^{(1)}$. The saturation index of $(\mathcal{X},\mathcal{M}_\mathcal{X})$ over $R^\dagger$ is the least common multiple of the integers $s(x)$ for $x \in F(\mathcal{X})^{(1)}$. \end{defi}

Let us now recall the definition of the sheaves of nearby cycles. Let $S = \mathrm{Spec}\,R$, let $\eta$ be its generic point and let $s$ be its closed point. Let $\eta^t$ be a tame closure of $\eta$ and let $S^t$ be the normalization of $S$ inside $\eta^t$. Let $\Lambda$ be $\mathbf{Z}/n\mathbf{Z}$, where $n$ is an integer prime to $p$, or one of $\mathbf{Z}_\ell$ and $\mathbf{Q}_\ell$. Let $\mathcal{X}$ be a scheme over $S$ and write $X = \mathcal{X}_\eta$. Consider the Cartesian squares
\begin{center}
\begin{tikzpicture}[auto]
\node (L1) {$\mathcal{X}_{s}$};
\node (L2) [below= 1.2cm of L1] {$s$};
\node (M1) [right= 1.2cm  of L1] {$\mathcal{X}^t$};
\node (M2) at (M1 |- L2) {$S^t$};
\node (R1) [right= 1.2cm  of M1] {$X^t$};
\node (R2) at (R1 |- M2) {$\eta^t$};

\draw[->] (L1) to node {\footnotesize $i^t$} (M1);
\draw[<-] (M1) to node {\footnotesize $j^t$} (R1);
\draw[->] (L2) to node {} (M2);
\draw[<-] (M2) to node {} (R2);
\draw[->] (L1) to node {} (L2);
\draw[->] (M1) to node {} (M2);
\draw[->] (R1) to node {} (R2);
\end{tikzpicture}
 \end{center}

Given $\mathcal{F}$ in $D^+(X,\Lambda)$, the derived category corresponding to bounded-below complexes of sheaves of $\Lambda$-modules on $X$, the corresponding complex of \emph{tame nearby cycles} is $$R\psi^t_\mathcal{X}(\mathcal{F}) := (i^t)^\star R(j^t)_\star(\mathcal{F}|_{X^t}) \in D^+(\mathcal{X}_s,\Lambda).$$  

Now let $X$ be any smooth, proper variety over $K$ and assume that there exists a proper, flat model $\mathcal{X}$ of $X$ over $R$ such that $\mathcal{X}^\dagger$ is \emph{log regular}. We will analyze the behaviour of the complex $R\psi^t_{\mathcal{X}}(\Lambda)$ on the logarithmic strata defined previously. We start with the following local computation of the trace of the tame monodromy operator $\varphi$:

\begin{lemm} \label{localcomp} With the above notation and assumptions, fix a point $x \in F(\mathcal{X})$. Take $z \in U(x)$ and let $\overline{z}$ be any geometric point lying over $z$. Then \begin{alignat*}{3}
  \mathrm{Tr}\left(\varphi^d \mid R\psi^t_\mathcal{X}(\Lambda)_{\overline{z}}\right) &=  s(x)' & &\ \ \text{if }x \in F(\mathcal{X})^{(1)} \text{ and } s(x)' \mid d, \\   \mathrm{Tr}\left(\varphi^d \mid R\psi^t_\mathcal{X}(\Lambda)_{\overline{z}}\right) & = 0 & &\ \ \text{else.} 
\end{alignat*} Here $s(x)'$ denotes the biggest prime-to-$p$ divisor of $s(x)$. \end{lemm}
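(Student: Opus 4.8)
The plan is to reduce the computation to a purely toric one via the logarithmic description of the tame nearby cycles, and then to evaluate the trace by a short linear-algebra argument.

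First I would invoke the logarithmic computation of the sheaves of tame nearby cycles. Since $\mathcal{X}^\dagger$ is log regular and vertical over $R^\dagger$, the work of Nakayama \cite{nakayama} in the log smooth case, extended to this generality by Vidal \cite{vidal}, shows that $R\psi^t_{\mathcal{X}}(\Lambda)$ is locally constant along each logarithmic stratum $U(x)$, with stalk at $\overline{z}$ expressible in terms of the log structure alone. Concretely, fix $x \in F(\mathcal{X})$ of height $h = h(x)$ and $z \in U(x)$; then $(\mathcal{M}_{\mathcal{X},z}^\sharp)^{\mathrm{gp}} \cong \mathbf{Z}^h$, and the structure morphism $\mathcal{X}^\dagger \to R^\dagger$ induces a homomorphism $u \colon \mathbf{Z} \to (\mathcal{M}_{\mathcal{X},z}^\sharp)^{\mathrm{gp}}$ carrying the class of the generator of the log structure of $R^\dagger$ to the vector $v$ of saturation indices $s(x_i)$ of the height $1$ generizations $x_i$ of $x$. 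Setting $G = \mathrm{coker}(u)$, the log description yields a canonical isomorphism
\begin{equation*}
R^q\psi^t_{\mathcal{X}}(\Lambda)_{\overline{z}} \;\cong\; \Bigl(\textstyle\bigwedge^q\bigl(G_{\mathrm{free}} \otimes_{\mathbf{Z}} \Lambda\bigr)\Bigr)(-q) \otimes_\Lambda \Lambda\bigl[(G_{\mathrm{tors}})^{(p')}\bigr],
\end{equation*}
where $(G_{\mathrm{tors}})^{(p')}$ is the prime-to-$p$ part of the torsion subgroup of $G$. The exterior-power factor is the cohomology of the toric part of the Kato--Nakayama log fibre, while the group algebra records its connected components; the tame monodromy $\varphi$ acts through the Tate twists and the canonical translation action on the group algebra, and since $k$ is algebraically closed it acts trivially on each Tate twist, hence trivially on the exterior-power factor.

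Second, I would compute the alternating trace. Verticality forces all the $s(x_i)$ to be positive, so $u$ is injective and $G_{\mathrm{free}} \cong \mathbf{Z}^{h-1}$; writing $g$ for the greatest common divisor of the coordinates of $v$, one has $G \cong \mathbf{Z}^{h-1} \oplus \mathbf{Z}/g\mathbf{Z}$. Using multiplicativity of the trace on the tensor product, the triviality of $\varphi$ on the Tate twists, and the identity $\sum_q (-1)^q \mathrm{Tr}(\bigwedge^q A) = \det(1 - A)$, I obtain
\begin{equation*}
\mathrm{Tr}\bigl(\varphi^d \mid R\psi^t_{\mathcal{X}}(\Lambda)_{\overline{z}}\bigr) = \mathrm{Tr}\bigl(\varphi^d \mid \Lambda[(G_{\mathrm{tors}})^{(p')}]\bigr)\cdot \det\bigl(1 - \varphi^d \mid G_{\mathrm{free}}\otimes_{\mathbf{Z}}\Lambda\bigr).
\end{equation*}
As $\varphi$ acts trivially on $G_{\mathrm{free}}\otimes_{\mathbf{Z}}\Lambda \cong \Lambda^{h-1}$, the determinant equals $(1-1)^{h-1}$, which vanishes as soon as $h \geq 2$. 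This disposes of every point $x \notin F(\mathcal{X})^{(1)}$.

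It remains to treat the height $1$ case, $h = 1$. Here $G_{\mathrm{free}} = 0$, so the complex is concentrated in degree $0$, and by Definition \ref{saturationindex} the map $u \colon \mathbf{Z} \to \mathbf{Z}$ is multiplication by $s(x)$; thus $G = \mathbf{Z}/s(x)\mathbf{Z}$ and $(G_{\mathrm{tors}})^{(p')} = \mathbf{Z}/s(x)'\mathbf{Z}$. The stalk is the regular representation $\Lambda[\mathbf{Z}/s(x)'\mathbf{Z}]$, on which $\varphi$ acts as the single $s(x)'$-cycle given by translation by a generator. Since such a cycle raised to the $d$-th power has $s(x)'$ fixed points when $s(x)' \mid d$ and none otherwise, $\mathrm{Tr}(\varphi^d \mid R\psi^t_{\mathcal{X}}(\Lambda)_{\overline{z}})$ equals $s(x)'$ when $s(x)' \mid d$ and $0$ when $s(x)' \nmid d$, as claimed.

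The hard part will be the first step: establishing the logarithmic formula for the stalk in the log regular (rather than merely log smooth) situation and pinning down the monodromy action on it. The crux is to verify, via the Nakayama--Vidal description of the log fibre, that the multiplicity attached to a height $1$ point is exactly the saturation index $s(x)$ of Definition \ref{saturationindex}, that the prime-to-$p$ truncation inherent in \emph{tame} cohomology replaces $s(x)$ by $s(x)'$, and that $\varphi$ acts trivially on the toric (free) part of the fibre --- all three being needed for the linear-algebra computation to output precisely $s(x)'$.
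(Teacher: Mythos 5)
Your proposal is correct and takes essentially the same route as the paper: both invoke the Nakayama--Vidal logarithmic description of the tame nearby cycles, identify the stalk of $R^0\psi^t_{\mathcal{X}}(\Lambda)$ with the group algebra of a finite cyclic group of order $s(x)'$ on which $\varphi$ acts by translation, and identify the higher $R^q\psi^t_{\mathcal{X}}(\Lambda)$ with wedge powers of the (trivially twisted, since $k$ is algebraically closed) free part of the cokernel, so that the alternating trace vanishes whenever $h(x) \geq 2$. Your identity $\sum_q (-1)^q \mathrm{Tr}\bigl(\bigwedge^q A\bigr) = \det(1 - A)$ is just a repackaging of the paper's binomial computation $\sum_i (-1)^i \binom{n}{i} = (1-1)^n = 0$.
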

\begin{proof} Nakayama gave a ``logarithmic'' description of the complex of nearby cycles in the case of log good reduction in \cite[Theorem 3.5]{nakayama}. This result was generalized by Vidal in \cite[\S 1.4]{vidal}; recall that the complex of nearby cycles is automatically tame in the case of log good reduction \cite[Theorem 3.2]{nakayama}. Their result is the following: let $\mathcal{C}$ be the locally constant sheaf  of finitely generated abelian groups on $\mathcal{X}_s$ given by $$\mathcal{C} = \text{coker}\left(f^\star(\mathcal{M}^{\sharp}_s)^{\text{gp}} \stackrel{\alpha}{\longrightarrow} (\mathcal{M}_{\mathcal{X}_s}^\sharp)^\text{gp}\right) \text{modulo torsion}.$$ For every integer $q \geq 1$, there is a natural, Galois equivariant isomorphism \begin{equation} R^0\psi^t_\mathcal{X}(\Lambda) \otimes \bigwedge^q \left(\mathcal{C}|_{\mathcal{X}_{\overline{s}}} \otimes \Lambda(-1)\right) \longrightarrow R^q \psi^t_\mathcal{X}(\Lambda) \label{wedgeproduct} \end{equation} since the log structure is vertical. The stalk of $R^0\psi^t_\mathcal{X}(\Lambda)$ at a classical geometric point $\overline{z}$ lying over $\overline{s}$ is non-canonically isomorphic to $\Lambda[E_{\overline{z}}]$, where $E_{\overline{z}}$ is the cokernel of the induced morphism of logarithmic inertia groups $I^{\text{log}}_{\overline{z}} \to I^{\text{log}}_{\overline{s}}$; this morphism is nothing but $\text{Hom}(\alpha_{\overline{z}},\widehat{\mathbb{Z}}'(1))$, with $\alpha$ as above.

If $h(x) = 1$, then $\mathcal{C}_{\overline{z}} = 0$ and therefore $$\mathrm{Tr}\left(\varphi^d \mid R\psi^t_\mathcal{X}(\Lambda)_{\overline{z}}\right) = \mathrm{Tr}\left(\varphi^d \mid R^0\psi^t_\mathcal{X}(\Lambda)_{\overline{z}}\right).$$ Since moreover $$R^0\psi^t_\mathcal{X}(\Lambda)_{\overline{z}} \cong \Lambda[E_{\overline{z}}],$$ where  $$E_{\overline{z}} = \text{coker}(\text{Hom}(\alpha_{\overline{z}},\widehat{\mathbb{Z}}'(1))$$ has order $s(x)'$, and the tame inertia group $I^t$ acts through  $$I^t \cong I_{\overline{s}}^\text{log} \hookrightarrow \Lambda[I_{\overline{s}}^\text{log}] \longrightarrow \Lambda[E_{\overline{z}}],$$ the trace of $\varphi^d$ on $R^0\psi^t_\mathcal{X}(\Lambda)_{\overline{z}}$ equals $s(x)'$ if $s(x)'$ divides $d$, and to $0$ else.

If $h(x) \geq 2$, then $$n := \mathrm{rank}_\mathbf{Z}\,\mathcal{C}_{\overline{z}} = h(x) - 1 \geq 1.$$ Hence (\ref{wedgeproduct}) yields \begin{eqnarray*} \mathrm{Tr}\left(\varphi^d \mid R\psi^t_\mathcal{X}(\Lambda)_{\overline{z}}\right)  & = & \sum_{i \geq 0} (-1)^i \mathrm{Tr}\left(\varphi^d \mid R^i \psi^t_\mathcal{X}(\Lambda)_{\overline{z}}\right) \\ & = & \sum_{i \geq 0} (-1)^i {n \choose i} \mathrm{Tr}\left(\varphi^d \mid R^0 \psi^t_\mathcal{X}(\Lambda)_{\overline{z}}\right) \\ & = & 0. \end{eqnarray*}

\end{proof}

We continue with a technical lemma on monoids.

\begin{lemm} \label{lemma:monoids} Let $P$ be an fs monoid. Let $\alpha: \mathbf{N} \to P$ be a vertical homomorphism and let $d \in \mathbf{Z}_{> 0}$. Let $Q = P \oplus_{\mathbf{N}} \frac1d \mathbf{N}$. The submonoid $Q^{\mathrm{sat}}$ of $Q^{\mathrm{gp}}$ is generated by $Q^\times$, the $d$-torsion $Q^{\mathrm{gp}}[d]$ and $I \subseteq Q^{\mathrm{sat}}$, the radical of the ideal generated by $P \setminus P^\times$. \end{lemm}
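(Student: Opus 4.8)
The plan is to reduce the statement to a transparent toric description of $Q^{\mathrm{sat}}$, and then to treat the units and the non-units separately. The starting point is the group $Q^{\mathrm{gp}}$. Since $\alpha$ is vertical, $\pi := \alpha(1)$ is a non-unit of $P$; writing $Q^{\mathrm{gp}} = (P^{\mathrm{gp}} \oplus \frac1d\mathbf{Z})/\langle(\pi,-d)\rangle$, I would first record two facts. First, the obvious projection yields a short exact sequence $0 \to P^{\mathrm{gp}} \to Q^{\mathrm{gp}} \to \mathbf{Z}/d \to 0$, the quotient being generated by the class of $t := \frac1d\pi$, a formal $d$-th root of $\pi$ satisfying $dt = \pi$ in $Q$. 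Second, because $P^{\mathrm{gp}}$ is free, the torsion of $Q^{\mathrm{gp}}$ is cyclic of order dividing $d$, so $Q^{\mathrm{gp}}_{\mathrm{tors}} = Q^{\mathrm{gp}}[d]$; this is the only source of the torsion appearing in the statement.

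The key reduction is the following toric model. Let $\rho: Q^{\mathrm{gp}} \to L := Q^{\mathrm{gp}}/Q^{\mathrm{gp}}[d]$ be the quotient by torsion, so that $P^{\mathrm{gp}} \hookrightarrow L$ is a finite-index inclusion of lattices with $L = P^{\mathrm{gp}} + \mathbf{Z}\cdot t$. Since $\pi \in P$ already lies in the cone $C := \mathrm{cone}(P) \subseteq P^{\mathrm{gp}}_{\mathbf{R}}$, adjoining $t = \frac1d\pi$ does not enlarge the cone; hence the image of $Q = P + \mathbf{N}t$ in $L$ has saturation exactly $C \cap L$. A short diagram chase (using that $Q^{\mathrm{gp}}[d]$ is killed by $d$) then shows $Q^{\mathrm{sat}} = \rho^{-1}(C \cap L)$, i.e. there is an exact sequence of monoids $0 \to Q^{\mathrm{gp}}[d] \to Q^{\mathrm{sat}} \to C \cap L \to 0$ with $C \cap L$ saturated and fine. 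Thus $Q^{\mathrm{sat}}$ is, up to the torsion group $Q^{\mathrm{gp}}[d]$, the toric monoid attached to the cone $C$ and the refined lattice $L$.

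With this model in hand the proof splits cleanly; I work in the relevant case where $P$ is sharp, so that $C$ is pointed and $P \setminus P^\times = P \setminus \{0\}$, this being the situation of the applications, where $P$ is a characteristic monoid. For the units: the units of $C \cap L$ are $C^{\mathrm{lin}} \cap L = \{0\}$, so the units of $Q^{\mathrm{sat}}$ are precisely $\rho^{-1}(0) = Q^{\mathrm{gp}}[d]$, which already contains $Q^\times$. For the non-units: I claim every non-unit $x$ of $Q^{\mathrm{sat}}$ lies in $I$. Its image $\rho(x)$ lies in the relative interior of some face $F \neq \{0\}$ of $C$; choosing a lattice point $p \in P$ in the relative interior of $F$ (which exists since $C = \mathrm{cone}(P)$, and is then automatically in $P \setminus P^\times$), one has $n\rho(x) - p \in F \subseteq C$ for all large $n$, so $nx - p \in \rho^{-1}(C\cap L) = Q^{\mathrm{sat}}$ and hence $nx \in (p) + Q^{\mathrm{sat}}$ lies in the ideal generated by $P \setminus P^\times$; thus $x \in I$. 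Conversely $I$ consists of non-units, since that ideal is proper: verticality guarantees that the image of any $p \in P \setminus P^\times$ is a non-unit of $Q^{\mathrm{sat}}$, its free part being nonzero and outside $C^{\mathrm{lin}}$. Combining the two cases, every element of $Q^{\mathrm{sat}}$ is either a unit, hence in $Q^{\mathrm{gp}}[d]$, or a non-unit, hence in $I$, which gives the claimed generation.

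The main obstacle is the non-unit statement, that $I$ exhausts the non-units: this is exactly where the combinatorics of the cone and verticality of $\alpha$ enter, through the assertion that a suitable multiple of any non-unit dominates a lattice point of $P \setminus P^\times$ lying in the same face. Establishing the toric model $Q^{\mathrm{sat}} = \rho^{-1}(C \cap L)$ — in particular that saturation only refines the lattice and leaves the cone unchanged because $\pi \in C$ — is the other step requiring care, but it is essentially formal once the torsion subgroup $Q^{\mathrm{gp}}[d]$ is understood.
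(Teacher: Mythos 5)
Your route is genuinely different from the paper's, and on the domain where you run it --- sharp $P$ --- it is correct. The paper argues element by element: it writes $x=(p,\frac{m}{d})\in Q^{\mathrm{sat}}$, deduces $dp+me\in P$ from saturation of $P$, and splits into the cases $dp+me\in P\setminus P^\times$ (then $x\in I$) and $dp+me\in P^\times$ (handled via the isomorphism $Q^\times\cong P^\times$ supplied by verticality). You instead pass to the torsion-free quotient $L$, identify $Q^{\mathrm{sat}}$ with $\rho^{-1}(C\cap L)$, and finish with a face/relative-interior argument; I checked the exact sequence $0\to Q^{\mathrm{gp}}[d]\to Q^{\mathrm{sat}}\to C\cap L\to 0$ and the non-unit step, and both are sound. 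The gap is your standing assumption that $P$ is sharp. The lemma is stated for an arbitrary fs monoid, and in the paper it is applied (proof of Proposition \ref{prop:degree}) to a chart $P$ that is \emph{not} assumed sharp: there $U(x)\cap\mathrm{Spec}\,A$ is identified with $\mathrm{Spec}\,(A\otimes_{\mathbf{Z}[P]}\mathbf{Z}[P^\times])$, so nontrivial $P^\times$ is very much part of the intended situation, and your appeal to ``the situation of the applications'' does not match the paper. Everything the paper does with units --- the identification $Q^\times\cong P^\times$ and the decomposition of a unit of $Q^{\mathrm{sat}}$ as an element of $Q^\times$ plus a $d$-torsion element --- is exactly what you omit; note also that verticality enters your argument only in inessential ways, which should have been a warning sign that the hypothesis is there to serve the non-sharp case you dropped.

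However, your retreat to the sharp case is better justified than you realize: for non-sharp $P$ the lemma as stated is actually false, and the paper's own unit case has a gap at the corresponding spot. Take $P=\mathbf{Z}\oplus\mathbf{N}$, $e=\alpha(1)=(1,2)$ (vertical, since the only proper face of $P$ is $P^\times=\mathbf{Z}\oplus\{0\}$) and $d=2$. Then $Q^{\mathrm{gp}}\cong\frac12\mathbf{Z}\oplus\mathbf{Z}$ is torsion free, $Q^{\mathrm{sat}}=\frac12\mathbf{Z}\oplus\mathbf{N}$, $Q^\times=\mathbf{Z}\oplus\{0\}$, $Q^{\mathrm{gp}}[2]=0$ and $I=\frac12\mathbf{Z}\oplus\mathbf{Z}_{\geq1}$; the element $(\frac12,0)$ of $Q^{\mathrm{sat}}$ is therefore not a sum of elements of $Q^\times\cup Q^{\mathrm{gp}}[2]\cup I$. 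What breaks in the paper's proof is the sentence producing $p'\in P^\times$ with $d(p,\frac{m}{d})=d(p',0)$: from $dx\in Q^\times$ it infers $dx\in dQ^\times$, i.e.\ that units of $P$ are $d$-divisible, and here $dx=(1,0)\notin 2P^\times$. The statement that is true in general, and that suffices for Proposition \ref{prop:degree}, is the one your exact sequence essentially gives: $Q^{\mathrm{sat}}=(Q^{\mathrm{sat}})^\times\cup I$, and $d\cdot(Q^{\mathrm{sat}})^\times$ lies in the image of $P^\times$, so that $(Q^{\mathrm{sat}})^\times$ modulo $Q^\times$ is finite and killed by $d$ --- which is what produces finite \'etale Kummer covers of degree dividing $d$ once $d$ is prime to $p$. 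So your write-up is a correct, and arguably cleaner, proof of the sharp case; but to play the role the lemma has in the paper, you should either prove this corrected general statement or explicitly rework Proposition \ref{prop:degree} with sharp (characteristic) charts, neither of which your proposal does.
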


\begin{proof} Let $e = \alpha(1)$. Since the homomorphism $\alpha$ is vertical, $e$ is not contained in any proper face of $P$. This implies that every non-maximal face of $Q$ is of the form $F \oplus \{0\}$, where $F$ is a non-maximal face of $P$. In particular, we have an isomorphism $$P^\times \to Q^\times: p \mapsto (p,0).$$ Take $(p,\frac{m}{d}) \in Q^{\mathrm{sat}}$, where $p \in P^{\mathrm{gp}}$ and $m \in \mathbf{Z}$. There exists an $N \in \mathbf{Z}_{>0}$ such that $N(p,\frac{m}{d}) \in Q$. Multiplying by $d$ if necessary, we can assume that $d \mid N$; hence $$N\left(p,\frac{m}{d}\right) = \left(Np,\frac{Nm}{d}\right) = \left(Np + \frac{Nm}{d}e,0\right),$$ and therefore $$Np +\frac{Nm}{d}e = \frac{N}{d}(dp + me) \in P.$$ Since $P$ is saturated, $dp + me \in P$. Conversely, if $dp + me \in P$, clearly $(p,\frac{m}{d}) \in Q^{\mathrm{sat}}$. 

If $dp + me \in P \setminus P^\times$, then $d(p,\frac{m}{d}) = (dp + me,0)$ lies in the ideal of $Q^{\mathrm{sat}}$ generated by the image of the maximal ideal of $P$; hence $(p,\frac{m}{d})$ lies in $I$, so we are done. 

Let us therefore assume that $dp + me \in P^\times$. Then $(p,\frac{m}{d}) \in (Q^{\mathrm{sat}})^\times$. Let us consider the image $\overline{x}$ of $x = (p,\frac{m}{d})$ in $Q^{\mathrm{sat}}/Q^\times$. It is clear that $\overline{x} \in (Q^{\mathrm{sat}}/Q^\times)^{\times}$. Hence there exists $y \in Q^{\mathrm{sat}}$ such that $\overline{x} + \overline{y} = \overline{0}$ in $Q^{\mathrm{sat}}/Q^\times$, i.e. $x + y \in Q^\times$. Since $dx \in Q$ and $dy \in Q$, we obtain $dx \in Q^\times$. Using the isomorphism $Q^\times \cong P^\times$ mentioned above, we see that there exists $p' \in P^\times$ such that $d(p,\frac{m}{d}) = d(p',0)$. Now $(p - p',\frac{m}{d})$ is clearly $d$-torsion; hence $$\left(p,\frac{m}{d}\right) = (p',0) + \left(p - p',\frac{m}{d}\right)$$ is the sum of an element of $Q^\times$ and an element of $Q^{\mathrm{gp}}[d]$, so we are done. \end{proof}

This leads to the following result:

\begin{prop} \label{prop:degree} Assume that $\mathcal{X}^\dagger$ (as above) is log regular. Fix $x \in F(\mathcal{X})$ and let $d \in \mathbf{Z}_{>0}$ be prime to $p$. The reduced inverse image of $U(x)$ in $\mathcal{X}(d)^\dagger$ is a finite \'etale cover of $U(x)$. The degree of this cover divides $d$ and $\mathrm{gcd}_{y \in x^{(1)}} s(y)$, where $x^{(1)}$ is the set of generizations of $x$ in $F(\mathcal{X})^{(1)}$.
\end{prop}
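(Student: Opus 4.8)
The plan is to reduce the statement to a purely local computation on toric (monoid) algebras and then feed it into Lemma~\ref{lemma:monoids}. First I would work Zariski-locally around the generic point of $U(x)$, choosing a chart $\mathbf{N} \xrightarrow{\alpha} P \to \mathcal{O}_{\mathcal{X}}$ with $P$ an fs monoid and $e = \alpha(1)$ the image of the uniformizer; the homomorphism $\alpha$ is vertical because the log structure on $\mathcal{X}^\dagger$ is vertical over $R^\dagger$. The point $x$ corresponds to a prime $\mathfrak{p}$ of height $h(x)$ with associated face $F = P \setminus \mathfrak{p}$, and $U(x)$ is the torus orbit attached to $F$. Localizing along $F$ and sharpening, I may replace $P$ by the sharp fs monoid $\overline{P} = P/F$ of rank $h(x)$, on which the induced $\overline{\alpha} : \mathbf{N} \to \overline{P}$, $1 \mapsto \overline{e}$, is still vertical (every proper face of $\overline{P}$ pulls back to a proper face of $P$ containing $F$, and $e$ lies in no proper face of $P$). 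In these coordinates $U(x)$ is the closed orbit, cut out with its reduced structure by the radical ideal $I_{\overline{P}}$ generated by $\overline{P} \setminus \overline{P}^\times$.

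Next I would describe $\mathcal{X}(d)^\dagger$ locally. By the definition of $\times^{\mathsf{fs}}$ it carries the chart $\overline{Q}^{\mathrm{sat}}$ with $\overline{Q} = \overline{P} \oplus_{\mathbf{N}} \tfrac{1}{d}\mathbf{N}$, and its underlying scheme is the base change of $\mathcal{X} \times_S S(d)$ along $\mathbf{Z}[\overline{Q}] \to \mathbf{Z}[\overline{Q}^{\mathrm{sat}}]$. Lemma~\ref{lemma:monoids} applies verbatim: $\overline{Q}^{\mathrm{sat}}$ is generated by its units $\overline{Q}^\times$, the $d$-torsion $\overline{Q}^{\mathrm{gp}}[d]$, and $I_{\overline{P}}$. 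The key consequence is that, \emph{modulo} $I_{\overline{P}}$, the monoid $\overline{Q}^{\mathrm{sat}}$ is generated by units together with torsion elements. Passing to monoid algebras and reducing onto the stratum — where moreover $T^d = \pi = 0$ forces $T$ to be nilpotent and hence killed by the reduced structure — the reduced inverse image of $U(x)$ becomes $U(x) \times_{\mathrm{Spec}\,\mathbf{Z}} \mathrm{Spec}\,\mathbf{Z}[G]$, where $G = \overline{Q}^{\mathrm{gp}}_{\mathrm{tors}}$ is the finite torsion group isolated above. Since $d$ is prime to $p$, the group $G$ has order prime to $p$, so $\mathrm{Spec}\,\mathbf{Z}[G]$ is finite \'etale over the base; therefore the reduced inverse image is a finite \'etale cover of $U(x)$ of degree $|G|$.

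It then remains to bound $|G|$. Here I would compute the torsion of $\overline{Q}^{\mathrm{gp}} = (\overline{P}^{\mathrm{gp}} \oplus \mathbf{Z})/\langle(\overline{e},-d)\rangle$ by Smith normal form: writing $\overline{P}^{\mathrm{gp}} \cong \mathbf{Z}^{h(x)}$ and letting $c$ be the largest integer with $\overline{e} \in c\,\overline{P}^{\mathrm{gp}}$, the relation vector $(\overline{e},-d)$ has entry-gcd $\gcd(c,d)$, so $|G| = \gcd(c,d)$, which already divides $d$. Finally, for every height $1$ generization $y \in x^{(1)}$ the corresponding ray of $\overline{P}$ gives a homomorphism $\overline{P}^{\mathrm{gp}} \to \mathbf{Z}$ sending $\overline{e} \mapsto s(y)$; since $c \mid \overline{e}$ in the lattice $\overline{P}^{\mathrm{gp}}$, applying this homomorphism yields $c \mid s(y)$, whence $c \mid \gcd_{y \in x^{(1)}} s(y)$ and a fortiori $|G| = \gcd(c,d)$ divides $\gcd_{y \in x^{(1)}} s(y)$.

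I expect the main obstacle to be the second step: converting the purely monoid-theoretic decomposition of Lemma~\ref{lemma:monoids} into a clean statement about the \emph{underlying schemes} of the fs fibre product. The delicate point is controlling the interaction between the saturation base change $\mathbf{Z}[\overline{Q}] \to \mathbf{Z}[\overline{Q}^{\mathrm{sat}}]$ and the two reductions (modulo $I_{\overline{P}}$ and modulo the nilpotent $T$), so as to certify that no extra nilpotents or ramification survive and that the cover is genuinely the \'etale $G$-cover described. Once $G$ is correctly identified, the degree bounds are a routine Smith-normal-form computation as above.
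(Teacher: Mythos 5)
Your proposal is correct and follows essentially the same route as the paper's proof: a local fs chart localized so that the prime corresponding to $x$ becomes maximal (your sharpening $\overline{P} = P/F$ is this same reduction), Lemma \ref{lemma:monoids} to identify the reduced inverse image of $U(x)$ with a prime-to-$p$ group-algebra (hence finite \'etale) cover, and the primitive-vector computation showing the torsion order divides $\gcd(d,\lambda)$ together with the observation that $\lambda$ divides $s(y)$ for every $y \in x^{(1)}$. The only differences are cosmetic: the paper keeps the units $P^\times$ rather than sharpening, and bounds the degree by divisibility instead of identifying the cover exactly as $U(x) \times \mathrm{Spec}\,\mathbf{Z}[G]$.
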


\begin{proof} On an affine neighbourhood $\mathrm{Spec}\,A$ around $x$, we have an fs chart for the morphism $\mathcal{X}^\dagger \to R^\dagger$ given by a commutative diagram of the form 
\begin{center}
\begin{tikzpicture}[auto]
\node (L1) {$\mathbf{N}$};
\node (L2) [below= 1.2cm of L1] {$P$};
\node (M1) [right= 1.2cm  of L1] {$R$};
\node (M2) at (M1 |- L2) {$A$};
\draw[->] (L1) to node {\footnotesize $1 \mapsto \pi$} (M1);
\draw[->] (L2) to node {} (M2);
\draw[->] (L1) to node  {} (L2);
\draw[->] (M1) to node {} (M2);
\end{tikzpicture} \end{center}
Similarly, a chart for the log structure on $\mathcal{X}(d)^\dagger$ is given by the diagram \begin{center}
\begin{tikzpicture}[auto]
\node (L1) {$\mathbf{N}$};
\node (L2) [below= 1.2cm of L1] {$Q^{\mathrm{sat}}$};
\node (M1) [right= 1.2cm  of L1] {$R(d)$};
\node (M2) at (M1 |- L2) {$B$};
\draw[->] (L1) to node {} (M1);
\draw[->] (L2) to node {} (M2);
\draw[->] (L1) to node  {} (L2);
\draw[->] (M1) to node {} (M2);
\end{tikzpicture} \end{center}
where $$Q = P \oplus_{\mathbf{N}} \frac{1}{d}\mathbf{N} \ \text{ and }\ B = A \otimes_{R[P]} R[Q^\mathrm{sat}].$$
Now $U(x) \cap \mathrm{Spec}\,A$ is nothing but $$\mathrm{Spec}\,A  \times_{\mathrm{Spec}\,R[P]} \mathrm{Spec}\left(R[P]/(\mathfrak{p})\right)_{\mathfrak{p}},$$ where $\mathfrak{p}$ is the prime ideal of $P$ which corresponds to $x$.  Here $(\mathfrak{p})$ denotes the ideal of $R[P]$ generated by $\mathfrak{p}$, and $_\mathfrak{p}$ denotes localization with respect to the multiplicative subset of $R[P]/(\mathfrak{p})$ generated by $P \setminus \mathfrak{p}$. We may and will assume that $\mathfrak{p}$ is the maximal ideal of $P$. Hence we can identify $U(x) \cap \mathrm{Spec}\,A$ with the spectrum of $A \otimes_{\mathbf{Z}[P]} \mathbf{Z}[P^\times]$. The (reduced) inverse image of $U(x)$ in $\mathcal{X}(d)^\dagger$ can be described in a similar way. The fact that this inverse image is \'etale over $U(x)$ is now an immediate consequence of Lemma \ref{lemma:monoids} (recall that we have chosen $d$ prime to $p$).

We will now analyze the degree of this \'etale cover. We can safely assume that $P^\mathrm{gp}$ is torsion free. Choose an isomorphism $P^{\mathrm{gp}} \cong \mathbf{Z}^r$, for some $r > 0$. Let $v$ be the image of $1$ under the composition $\mathbf{N} \to P \to P^\mathrm{gp} \to \mathbf{Z}^r$. Hence we get an identification $$Q^{\mathrm{gp}} \cong (\mathbf{Z}^r \oplus \mathbf{Z})/\langle (v,-d) \rangle.$$ Write $v = \lambda v_0$, where $\lambda \in \mathbf{Z}$ and $v_0$ is a \emph{primitive vector}. Hence the order of the torsion in $Q^{\mathrm{gp}}$ divides $\mathrm{gcd}(d,\lambda)$. The statement about the degree of the \'etale cover now follows from Lemma \ref{lemma:monoids}, together with the observation that $\lambda$ divides the saturation index $s(y)$ for any $y \in x^{(1)}$. Indeed, if $\mathfrak{p}$ is the height $1$ prime ideal of $P$ corresponding to $y$, then $s(y)$ can be identified with the image of $1$ under $P \to P/(P \setminus \mathfrak{p}) \cong \mathbf{N}$. Consider the homomorphisms $$\mathbf{N} \to P \hookrightarrow P^{\mathrm{gp}} \to P^\mathrm{gp}/(P \setminus \mathfrak{p})^{\mathrm{gp}} \cong \mathbf{Z};$$ since the image of $1$ in $P^{\mathrm{gp}}$ is divisible by $\lambda$, so is the integer $s(y)$, since this is simply the image of $1$ in the quotient $P^{\mathrm{gp}}/(P \setminus \mathfrak{p})^{\mathrm{gp}} \cong \mathbf{Z}$. \end{proof}

We can now prove the following result about sheaves of tame nearby cycles, generalizing the results of \cite[\S 2.5]{nic2}:

\begin{theo} \label{lissetamelyramified} As above, assume that $\mathcal{X}^\dagger$ is log regular and fix $x \in F(\mathcal{X})$. Write $\Lambda = \mathbf{Q}_\ell$ for some prime $\ell \neq p$. The sheaves $R^n\psi^t_\mathcal{X}(\Lambda)$ are lisse on $U(x)$ and tamely ramified along the boundary components of any normal compactification. They become constant on a finite \'etale cover of degree dividing $\mathrm{gcd}_{y \in x^{(1)}} s(y)'$.
\end{theo}

\begin{proof} Let $\tilde{s}$ be a log geometric point lying above $s$. Write $\mathcal{X}_{\tilde{s}} = \mathcal{X}_s \times^{\text{fs}}_s \tilde{s}$. We know by \cite[Corollary 8.4]{illusie} that the tame nearby cycles complex is given by the formula $$R\psi^t_{\mathcal{X}}(\Lambda) = R\tilde{\varepsilon}_{\star}(\Lambda|_{\mathcal{X}_{\tilde{s}}}),$$ where $\tilde{\varepsilon}$ is the composition $$\mathcal{X}_{\tilde{s}}^{\text{ket}} \stackrel{\varepsilon}{\longrightarrow} \mathcal{X}_{\tilde{s}}^{\text{et}} \stackrel{\alpha}{\longrightarrow} \mathcal{X}_{s}^{\text{et}}.$$ Here $\varepsilon$ is the canonical map from the Kummer \'etale site to the classical \'etale site; of course $\alpha$ need not be an isomorphism at the level of the underlying schemes. Hence $$R^0\psi^t_{\mathcal{X}}(\Lambda) = \alpha_\star(\Lambda|_{X_{\tilde{s}}}).$$ By \cite[Proposition 1.3.4.1]{vidal}, $\alpha$ is the composition of a surjective closed immersion $\beta_1$ and a finite morphism $\beta_2$. More precisely, $\alpha$ factors as $$\mathcal{X}_{\tilde{s}} \stackrel{\beta_1}{\longrightarrow} \mathcal{X}_{\tilde{s}_n} \stackrel{\beta_2}{\longrightarrow} \mathcal{X}_{s}.$$ Here $\mathcal{X}_{\tilde{s}_n}$ denotes the log scheme $\mathcal{X}_s \times_{s}^{\text{fs}} \tilde{s}_n$, which is nothing but the special fibre of $\mathcal{X}(n)^\dagger$; the integer $n$ is the biggest prime-to-$p$ divisor of the saturation index of $\mathcal{X}$ over $R$. The degree $d_x = \sum_{\alpha(y) = x} [\kappa(y) : \kappa(x)]$ of $\beta_2$ in a point $x$ equals the cardinality of $E_{\overline{x}} = \text{coker}\,\text{Hom}(\varphi_{\overline{x}},\widehat{\mathbf{Z}}'(1))$, where $\varphi$ is the induced map $f^\star(\mathcal{M}^{\sharp}_s)^{\text{gp}} \longrightarrow (\mathcal{M}_{\mathcal{X}_s}^\sharp)^\text{gp}$. 

Let us now fix $x \in F(\mathcal{X})$. We know that $$R^0\psi^t_{\mathcal{X}}(\Lambda) = \alpha_\star(\Lambda|_{X_{\tilde{s}}}) = (\beta_2)_{\star}(\Lambda|_{X_{\tilde{s}_n}}),$$ hence the restriction of $R^0\psi^t_{\mathcal{X}}(\Lambda)$ to $U(x)$ becomes constant on the inverse image of $U(x)$ in $\mathcal{X}(n)$, which is \'etale by Proposition \ref{prop:degree}; the statement about the degree is a consequence of the fact that $n$ is the biggest prime-to-$p$ divisor of $\mathrm{lcm}_{y \in x^{(1)}} s(y)$.

Hence $R^n \psi_{\mathcal{X}}(\Lambda)$ becomes constant on the same cover, for any $n \geq 0$, since the sheaf $\mathcal{C}$ in the proof of Lemma \ref{localcomp} is constant on $U(x)$. It follows that the sheaves $R^n \psi^t_{\mathcal{X}}(\Lambda)$ are lisse, and tamely ramified along the boundary of any normal compactification. \end{proof}

We are now ready to calculate the trace of the tame monodromy operator:

\begin{theo} \label{theoremtrace} Let $\mathcal{X}$ be a proper, flat $R$-model such that $\mathcal{X}^\dagger$ is log regular. Then \begin{equation}   \mathrm{Tr}\left(\varphi^d \mid H^\star(X \times_K K^t,\mathbf{Q}_\ell)\right) = \sum_{\substack{x \in F(\mathcal{X})^{(1)} \\ s(x)' \mid d\ }} s(x)' \chi(U(x)). \label{trace} \end{equation}  for every positive integer $d$.
\end{theo}
\begin{proof} We will prove a more general result: if $Z$ is a subscheme of $\mathcal{X}_s$, then \begin{equation} \sum_{m \geq 0} (-1)^m \mathrm{Tr} \left(\varphi^d \mid \mathbf{H}^m_c(Z,R\psi^t_\mathcal{X}(\mathbf{Q}_\ell)|_Z)\right) = \sum_{\substack{x \in F(\mathcal{X})^{(1)} \\ s(x)' \mid d\ }} s(x)' \chi(U(x) \cap Z). \label{traceZ} \end{equation} In particular, (\ref{trace}) follows from (\ref{traceZ}) by taking $Z = \mathcal{X}_s$ and applying the spectral sequence for nearby cycles, since $\mathcal{X}$ is assumed to be proper over $S$. To prove the equality (\ref{traceZ}), note that both sides are additive with respect to partitioning $Z$ into subvarieties; this allows us to assume that $Z$ is contained in $U(x)$, for some $x \in F(\mathcal{X})$. By the spectral sequence for hypercohomology, the left hand side of (\ref{traceZ}) equals $$\sum_{p, q \geq 0} (-1)^{p + q} \mathrm{Tr}\left(\varphi^d \mid H^p_c(Z,R^q\psi^t_{\mathcal{X}}(\mathbf{Q}_\ell)|_Z)\right).$$ We can assume that $Z$ is normal and choose a normal compactification $Z^\text{c}$. We know (Proposition \ref{lissetamelyramified}) that the sheaves $R^n\psi^t_{\mathcal{X}}(\mathbf{Q}_\ell)|_Z$ are lisse on $Z$, and tamely ramified along each of the irreducible components of $Z^c \setminus Z$. Let $z \in Z$ and let $\overline{z}$ be a geometric point lying over $z$. Using Nakayama's description of the action of the monodromy operator on the stalks $R^n\psi^t_\mathcal{X}(\mathbf{Q}_\ell)_{\overline{z}}$ -- cfr. the proof of Lemma \ref{localcomp} -- we see that this action has finite order. Hence we can apply \cite[Lemma 5.1]{NicaiseSebag}, which gives $$\sum_{p \geq 0} (-1)^p \mathrm{Tr}\left(\varphi^d \mid H^p_c(Z,R^q\psi^t_{\mathcal{X}}(\mathbf{Q}_\ell)|_Z)\right) = \chi(Z) \cdot \mathrm{Tr}(\varphi^d \mid R^q \psi^t_{\mathcal{X}}(\mathbf{Q}_{\ell})_{\overline{z}}).$$ Using Lemma (\ref{localcomp}), we see that the only contributions to the left-hand side of the equality (\ref{traceZ}) will come from the height $1$ points in the fan $F(\mathcal{X})$: we get \begin{align*} & \sum_{m \geq 0} (-1)^m \mathrm{Tr} \left(\varphi^d \mid \mathbf{H}^m_c(Z,R\psi^t_\mathcal{X}(\mathbf{Q}_\ell)|_Z)\right) \\  = \ \ & \chi(Z) \cdot \sum_{q \geq 0} (-1)^q \mathrm{Tr}(\varphi^d \mid  R^q \psi^t_{\mathcal{X}}(\mathbf{Q}_{\ell})_{\overline{z}}) \\ = \ \  & s(x)' \chi(Z)\end{align*}  if the point $x \in F(\mathcal{X})^{(1)}$ satisfies $s(x)' \mid d$; in all other cases, we get no contribution. This concludes the proof of the theorem. \end{proof}

As a corollary, we get a formula for the tame monodromy zeta function of $X$: \begin{eqnarray} \zeta_X(t) & = & \det\left(t \cdot \mathrm{Id} - \varphi \mid H^{\star}(X \times_K K^t,\mathbf{Q}_\ell)\right) \nonumber \\ & = & \prod_{m \geq 0} \det\left(t \cdot \mathrm{Id} - \varphi \mid H^m(X \times_K K^t,\mathbf{Q}_\ell)\right)^{(-1)^{m + 1}}.\end{eqnarray} The result is an A'Campo type formula \cite[\S 1]{A'Campo}, generalizing the results of \cite[\S 2.6]{nic2}:

\begin{coro} We have \begin{equation} \label{A'Campo} \zeta_X(t) \ = \  \prod_{\substack{x \in F(\mathcal{X})^{(1)} \\ s(x)' \mid d\ }} (t^{s(x)'} - 1)^{-\chi(U(x))}.\end{equation} \end{coro}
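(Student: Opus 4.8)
The plan is to deduce this multiplicative identity from the additive trace formula (\ref{trace}) by passing through the Grothendieck ring of representations of the procyclic group generated by $\varphi$. First I would record that the eigenvalues of $\varphi$ on each $H^m(X \times_K K^t,\mathbf{Q}_\ell)$ are roots of unity (quasi-unipotence of the tame monodromy); since both the zeta function $\zeta_X$ and the power traces $\mathrm{Tr}(\varphi^d \mid H^\star)$ depend only on the characteristic polynomials of $\varphi$, I may replace $\varphi$ by its semisimplification without altering either side. After this reduction the action on the virtual object $[H^\star] := \sum_m (-1)^m [H^m]$ factors through a finite cyclic group $\mathbf{Z}/N\mathbf{Z}$, where $N$ is the least common multiple of the orders of the eigenvalues together with the integers $s(x)'$, and I regard $[H^\star]$ as an element of the representation ring $R := K_0(\mathbf{Q}_\ell[\mathbf{Z}/N\mathbf{Z}]\text{-mod})$.

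The key computation is that for the regular representation $\rho_s = \mathbf{Q}_\ell[\mathbf{Z}/s\mathbf{Z}]$, on which a generator acts by cyclic translation of the standard basis, one has $\mathrm{Tr}(\varphi^d \mid \rho_s) = s$ if $s \mid d$ and $0$ otherwise, since $\varphi^d$ fixes no basis vector unless $s \mid d$. Comparing this with (\ref{trace}) shows that the two virtual representations
\[
[H^\star] \qquad\text{and}\qquad \sum_{x \in F(\mathcal{X})^{(1)}} \chi(U(x))\,[\rho_{s(x)'}]
\]
have the same character $d \mapsto \mathrm{Tr}(\varphi^d \mid \,\cdot\,)$ for every $d \geq 1$. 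Since characters of the finite abelian group $\mathbf{Z}/N\mathbf{Z}$ are linearly independent over $\mathbf{Q}_\ell$, a virtual representation is determined by its character, and hence these two classes coincide in $R$.

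To convert this additive identity into the desired product, I would use the group homomorphism $\Phi \colon (R,+) \to (\mathbf{Q}_\ell(t)^\times,\times)$ defined by $\Phi([V]) = \det(t\cdot\mathrm{Id} - \varphi \mid V)$; this is additive-to-multiplicative because determinants are multiplicative on short exact sequences, and it lands in units since the eigenvalues of $\varphi$ are nonzero. On the regular representation it gives $\Phi([\rho_s]) = \prod_{\zeta^s = 1}(t-\zeta) = t^s - 1$. Unwinding the definition of $\zeta_X$, the exponents $(-1)^{m+1}$ give exactly $\zeta_X(t) = \Phi([H^\star])^{-1}$. Applying $\Phi$ to the identity in $R$ then yields
\[
\zeta_X(t) = \Phi\!\left(\sum_{x} \chi(U(x))\,[\rho_{s(x)'}]\right)^{-1} = \prod_{x \in F(\mathcal{X})^{(1)}} (t^{s(x)'}-1)^{-\chi(U(x))},
\]
which is the claimed A'Campo formula.

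The only genuine subtlety I expect is the non-semisimplicity of the monodromy operator: a priori $\varphi$ carries nontrivial unipotent Jordan blocks, so $\langle\varphi\rangle$ does not literally act through a finite group and the cohomology is not a genuine sum of regular representations. The argument must therefore be run at the level of characteristic polynomials, equivalently of the semisimplification, where both $\det(t\cdot\mathrm{Id}-\varphi)$ and the traces of powers are insensitive to the unipotent part; only after this replacement does the clean representation-ring formalism apply. Everything else is formal bookkeeping — the passage from the additive relation in $R$ to the multiplicative relation via $\Phi$, and keeping track of the single inversion coming from the $(-1)^{m+1}$ convention. (Note that the condition $s(x)' \mid d$ written under the product sign is vacuous, as $\zeta_X$ involves no parameter $d$; the product ranges over all of $F(\mathcal{X})^{(1)}$.)
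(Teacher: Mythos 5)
Your proof is correct, but it takes a genuinely different route from the paper's. The paper deduces (\ref{A'Campo}) from the trace identity (\ref{traceZ}) by a generating-series argument: it invokes the classical identity $\det(\mathrm{Id} - t \cdot g \mid V)^{-1} = \exp\bigl(\sum_{d > 0} \mathrm{Tr}(g^d \mid V)\, t^d/d\bigr)$ in $\mathbf{Q}_\ell[[t]]$, following Deligne and A'Campo, and one advantage of that route is that it proves the relative statement for every subscheme $Z \subseteq \mathcal{X}_s$ in one stroke (your argument would also adapt, by replacing $H^\star$ with $\mathbf{H}^\star_c(Z, R\psi^t_{\mathcal{X}}(\mathbf{Q}_\ell)|_Z)$ and quoting (\ref{traceZ})). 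You instead semisimplify $\varphi$, use the validity of (\ref{trace}) for all $d$ together with linear independence of characters to identify the virtual representation $[H^\star]$ with $\sum_x \chi(U(x))\,[\rho_{s(x)'}]$ in the representation ring of a finite cyclic group, and then apply the determinant homomorphism $\Phi$. This is sound, and in fact it establishes something slightly more structured than the corollary: the semisimplified virtual tame-monodromy representation equals the integer combination $\sum_x \chi(U(x))\,[\mathbf{Q}_\ell[\mathbf{Z}/s(x)'\mathbf{Z}]]$ of permutation representations, of which (\ref{A'Campo}) is the image under $\Phi$.

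Two remarks. First, the one non-formal input you invoke --- quasi-unipotence of $\varphi$ on $H^\star(X \times_K K^t, \mathbf{Q}_\ell)$ --- is not automatic in this setting: since $k$ is algebraically closed, Grothendieck's arithmetic argument for quasi-unipotence does not apply, so you should justify it from the paper's own results. It does follow: $\varphi$ acts with finite order on each sheaf $R^q\psi^t_{\mathcal{X}}(\mathbf{Q}_\ell)$ (by the descriptions in Lemma \ref{localcomp} and Theorem \ref{lissetamelyramified}), hence its eigenvalues on the $E_2$-page of the nearby-cycle spectral sequence, and therefore on the abutment, are roots of unity. Note that the paper's route quietly needs the same fact: the exponential identity computes $\det(\mathrm{Id} - t\varphi)$, whereas $\zeta_X$ is built from $\det(t \cdot \mathrm{Id} - \varphi)$, and the discrepancy is a power of $t$ whose exponent vanishes precisely because $\sum_m (-1)^m \dim H^m = \sum_x s(x)'\chi(U(x))$ --- an equality that again rests on the finite-order statements. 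Making this explicit, as you do, is a genuine gain in rigor. Second, you are right that the condition $s(x)' \mid d$ under the product sign in (\ref{A'Campo}) is a typographical remnant of (\ref{trace}): the product runs over all of $F(\mathcal{X})^{(1)}$, and that is what both proofs deliver.
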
 
\begin{proof} As in the proof of Theorem \ref{theoremtrace}, we can actually prove a more general result: if $Z$ is any subscheme of $\mathcal{X}_s$, then the equality \begin{equation} \det\left(t \cdot \mathrm{Id} - \varphi \mid \mathbf{H}^\star_{\text{c}} (Z,R\psi^t_{\mathcal{X}}(\mathbf{Q}_\ell)|_Z)\right) = \prod_{\substack{x \in F(\mathcal{X})^{(1)} \\ s(x)' \mid d\ }} (t^{s(x)'} - 1)^{-\chi(U(x) \cap Z)} \label{ACampoZ} \end{equation} holds. This formula can easily be deduced from the equality (\ref{traceZ}) using the fact that, for every field $F$ of characteristic zero, every finite dimensional $F$-vector space $V$ and every endomorphism $g$ of $V$, we have the classical identity $$\det(\mathrm{Id} - t \cdot g \mid V)^{-1} = \exp\left(\sum_{d > 0} \mathrm{Tr}(g^d \mid V) \frac{t^d}{d}\right)$$ in $F[[t]]$. (For similar arguments, we refer to \cite[1.5]{weil1} and \cite[\S 1]{A'Campo}.) \end{proof}

\section{Log blow-ups and the rational volume} \label{sec:rationalvolume}

As in the previous section, we assume that we are given a smooth, proper $K$-variety $X$ for which there exists a proper, flat $R$-model $\mathcal{X}$ such that $\mathcal{X}^\dagger$ is log regular. We will now compute the rational volume of $X$ in terms of the logarithmic stratification. 

We start with a simple lemma.

\begin{lemm} \label{verticallogstructure} Under the above assumptions, the generic fibre $X = \mathcal{X} \times_R K$ is smooth. Moreover, the following statements are equivalent:
\begin{itemize}
\item[(A)] the scheme $\mathcal{X}$ is regular and $\mathcal{X}_s$ is a divisor with strict normal crossings;
\item[(B)] for every point $x \in F(\mathcal{X})$, we have $\mathcal{M}_{F(\mathcal{X}),x} \cong \mathcal{M}_{\mathcal{X},x}^\sharp \cong \mathbf{N}^{h(x)}$.
\end{itemize}
As before, $h(x)$ denotes the height of the point $x$ in the fan $F(\mathcal{X})$.
\end{lemm}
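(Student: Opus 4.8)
The plan is to translate both (A) and (B) into a single condition on the characteristic monoids, namely their \emph{freeness}, using Kato's local structure theory for log regular schemes; once this is done the equivalence becomes essentially formal. For the smoothness of the generic fibre there is in fact little to do: since $\mathcal{X}$ is flat over $R$, the log structure $\mathcal{X}^\dagger$ is vertical over $R^\dagger$, so it is trivial on $X = \mathcal{X} \times_R K$, which is smooth over $K$ by the standing hypothesis of this section. The substantive consequence I would record for later use is that $\mathcal{M}_{\mathcal{X},x}^\sharp = 0$ for every $x$ on the generic fibre, and more generally that log regularity of $\mathcal{X}^\dagger$ specialises to ordinary regularity wherever the log structure is trivial.

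Next I would dispose of the first isomorphism in (B). For a point $x \in F(\mathcal{X})$ the identification $\mathcal{M}_{F(\mathcal{X}),x} \cong \mathcal{M}_{\mathcal{X},x}^\sharp$ is built into the definition of the fan, which carries the inverse image of $\mathcal{M}_{\mathcal{X}}^\sharp$; hence it is automatic, and the real content of (B) is that $\mathcal{M}_{\mathcal{X},x}^\sharp \cong \mathbf{N}^{h(x)}$ for every $x \in F(\mathcal{X})$. Since the characteristic sheaf $\mathcal{M}_{\mathcal{X}}^\sharp$ is constant along each logarithmic stratum (this is exactly how the characteristic morphism $\pi$ is built, cf. \cite[\S 10]{kato}), one has $\mathcal{M}_{\mathcal{X},z}^\sharp \cong \mathcal{M}_{\mathcal{X},\pi(z)}^\sharp$ for all $z \in \mathcal{X}$, and $h(\pi(z)) = \mathrm{rank}_{\mathbf{Z}}(\mathcal{M}_{\mathcal{X},z}^\sharp)^{\mathrm{gp}}$. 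Therefore (B) is equivalent to the assertion that the sharp fs monoid $\mathcal{M}_{\mathcal{X},z}^\sharp$ is free for \emph{every} $z \in \mathcal{X}$ (quantifying over fan points suffices, as face quotients of free monoids are free, but one does not need this refinement).

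The heart of the matter is the following consequence of Kato's structure theorem for log regular schemes \cite{kato}: Zariski-locally around $z$, the completion $\widehat{\mathcal{O}_{\mathcal{X},z}}$ is assembled from $\mathcal{M}_{\mathcal{X},z}^\sharp$ and a regular system of parameters cutting out $C_{\mathcal{X},z}$, so that $\mathcal{O}_{\mathcal{X},z}$ is a regular local ring if and only if $\mathcal{M}_{\mathcal{X},z}^\sharp$ is free (equivalently, the affine toric scheme $\mathrm{Spec}\,\mathbf{Z}[\mathcal{M}_{\mathcal{X},z}^\sharp]$ is regular). Combined with the previous paragraph, this already yields the equivalence between the regularity of $\mathcal{X}$ and condition (B); in particular (A)$\Rightarrow$(B) follows at once from the regularity half of (A). It then remains only to match the \emph{strict normal crossings} clause with freeness. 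Assuming freeness I would choose a chart $\mathbf{N}^r \to \mathcal{O}_{\mathcal{X},z}$, $e_i \mapsto u_i$; log regularity forces $u_1,\dots,u_r$ to be part of a regular system of parameters, since they generate $I(z,\mathcal{M}_{\mathcal{X}})$ and $C_{\mathcal{X},z}$ is regular of codimension $r$, while the structure map $\mathbf{N} \to \mathbf{N}^r$, $1 \mapsto (a_1,\dots,a_r)$, together with the fact that the log structure is the divisorial one attached to $\mathcal{X}_s$, shows that $\pi$ is a unit times $u_1^{a_1}\cdots u_r^{a_r}$. Thus $\mathcal{X}_s = V(\pi)$ is cut out by a monomial in part of a regular system of parameters, i.e. it is a strict normal crossings divisor, giving (B)$\Rightarrow$(A).

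I expect the main obstacle to be making Kato's structure theorem precise enough to extract both the regular-versus-free dichotomy \emph{and} the description of the $u_i$ as part of a regular system of parameters cutting out $C_{\mathcal{X},z}$; this is the only genuinely non-formal input. Once this local statement is secured, the remaining ingredients—the constancy of the characteristic monoid along the logarithmic strata and the monomial description of $\mathcal{X}_s$—are routine bookkeeping with fs monoids.
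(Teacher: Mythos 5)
Your proposal is correct and takes essentially the same route as the paper: the paper also reduces the equivalence to the statement that $\mathcal{X}$ is regular if and only if the monoids $\mathcal{M}_{F(\mathcal{X}),x}$ are free and finitely generated at all fan points (it cites \cite[Corollary 9.5.35]{GR} for this, which is exactly the regular-versus-free dichotomy you extract from Kato's structure theorem), and it proves the strict normal crossings clause by the same chart computation you describe, writing $\pi$ as a unit times a monomial in elements that generate $I(x,\mathcal{M}_{\mathcal{X}})$ with regular quotient $C_{X,x}$, concluding via \cite[Proposition 4.2.15]{liu}. The only divergence worth noting is the smoothness of the generic fibre: the paper deduces it from verticality of the log structure together with log regularity (so that the lemma remains usable in Theorem \ref{sX}, where $X$ is not assumed smooth beforehand), whereas you fall back on the section's standing hypothesis, which is a harmless but slightly weaker reading of that claim.
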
 
\begin{proof} The fact that the generic fibre is smooth follows from the verticality of the log structure. The underlying scheme $\mathcal{X}$ is regular if and only if for every $x \in F(\mathcal{X})$, the monoid $\mathcal{M}_{F(\mathcal{X}),x}$ is free and finitely generated by \cite[Corollary 9.5.35]{GR}. 

It remains to check that the statement about the special fibre. Assume that (B) holds and fix any $x \in F(\mathcal{X})$. Locally around $x$, we have a chart given by 
\begin{center}
\begin{tikzpicture}[auto]
\node (L1) {$\mathbf{N}$};
\node (L2) [below= 1.2cm of L1] {$R$};
\node (M1) [right= 1.2cm  of L1] {$\mathcal{O}_{\mathcal{X},x}^\times \oplus  \mathbf{N}^{h(x)}$};
\node (M2) at (M1 |- L2) {$\mathcal{O}_{\mathcal{X},x}$};
\draw[->] (L1) to node {} (M1);
\draw[->] (L2) to node {} (M2);
\draw[->] (L1) to node  {} (L2);
\draw[->] (M1) to node {\footnotesize $e_i \mapsto x_i$} (M2) {};
\end{tikzpicture} \end{center}
 
where $(e_i)_{1 \leq i \leq h(x)}$ is the standard basis for $\mathbf{N}^{h(x)}$ and \begin{equation} \label{sncd} \pi = u \prod_{i = 1}^{h(x)} x_i^{n_i}\ \text{ in } \mathcal{O}_{\mathcal{X},x} \end{equation} for some $u \in \mathcal{O}_{\mathcal{X},x}^\times$ and positive integers $(n_i)_{1 \leq i \leq h(x)}$. Since $I(x,\mathcal{M}_\mathcal{X}) = (x_1,\cdots,x_{h(x)})$ and $C_{X,x} = \mathcal{O}_{\mathcal{X},x}/I(x,\mathcal{M}_{\mathcal{X}})$ is a regular local ring by \cite[Definition 2.1]{kato}, we see that $\mathcal{X}$ has strict normal crossings at $x$ by \cite[Proposition 4.2.15]{liu}. 
\end{proof}

In the situation of Proposition \ref{verticallogstructure}, we have \begin{equation} \label{sncdformula} \mathcal{X}_s = \sum_{x \in F(\mathcal{X})^{(1)}} s(x) \, \overline{U(x)}. \end{equation} 
Let $Y$ be a smooth, projective variety over $K$. If there exists an \emph{sncd model} $\mathcal{Y}$ of $Y$ as in Proposition \ref{verticallogstructure}, then the rational volume can be read off from the formula (\ref{sncdformula}) following \cite[Proposition 4.2.1]{nic2}: we have \begin{equation} \label{rationalvolume} \mathrm{s}(Y) = \sum_{\substack{y \in F(\mathcal{Y})^{(1)} \\ s(y) = 1\ }}  \chi(U(y)). \end{equation} Indeed, $\mathcal{Y}$ is a regular and proper model of $Y$; its smooth locus (over $R$) is a weak N\'eron model for $Y$. The smooth locus of $\mathcal{Y}_s$ is precisely the open subscheme $$\bigsqcup_{y \in F(\mathcal{Y})^{(1)},\ s(y) = 1} U(y),$$ whence equality (\ref{rationalvolume}) follows. 

We can now state and prove the main result of this section:

\begin{theo} \label{sX} Let $\mathcal{X}$ be a proper, flat $R$-scheme such that $\mathcal{X}^\dagger$ is log regular. Let $X = \mathcal{X} \times_R K$ be the generic fibre. Then \begin{equation} \mathrm{s}(X) = \sum_{\substack{x \in F(\mathcal{X})^{(1)} \\ s(x) = 1\ }}  \chi(U(x)).\label{rationalvolumeexpression} \end{equation} \end{theo}

The idea is simple: we consider a subdivision $\varphi: F' \to F(\mathcal{X})$ of the fan in the sense of \cite[Definition 9.6]{kato} such that $\varphi^\star \mathcal{X}$ satisfies the equivalent conditions of Proposition \ref{verticallogstructure}; in particular, such that the underlying scheme of $\varphi^\star \mathcal{X}$ is (classically) regular. This is possible by \cite[\S 10.4]{kato}. We can then compute the rational volume starting from the modified model $\varphi^\star \mathcal{X}$ using the formula (\ref{rationalvolume}). The key technical ingredient needed for this computation is the fact that the log blow-up $\mathrm{Bl}_{\varphi}: \varphi^\star \mathcal{X} \to \mathcal{X}$ is a piecewise trivial fibration in tori; this is the content of the following lemma.

\begin{lemm} \label{keylemma} Let $\mathcal{X}$ be a flat $R$-scheme such that $\mathcal{X}^\dagger$ is log regular. Consider a subdivision $\varphi: F' \to F(\mathcal{X})$ of its fan. Fix $x' \in F'$ and $x \in \mathcal{X}$ such that $\varphi(x') = \pi(x)$, i.e. $x \in U(\varphi(x')) \subseteq \mathcal{X}$. Then we have $$U(x') \cap \emph{Bl}_\varphi^{-1}(x) \cong \mathbf{G}^{h(\varphi(x')) - h(x')}_{m,\kappa(x)}$$ where $\emph{Bl}_{\varphi}^{-1}(x)$ is the fibre of the log blow-up $\varphi^{\star} \mathcal{X} \to \mathcal{X}$ above $x$. \label{blowups} \end{lemm}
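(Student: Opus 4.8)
The statement is local on $\mathcal{X}$, so the plan is to reduce it to a purely toric computation and then invoke the orbit--cone correspondence. First I would choose an affine neighbourhood $\mathrm{Spec}\,A$ of $x$ carrying an fs chart $P \to A$ for the log structure, arranged so that the point $\varphi(x') = \pi(x)$ corresponds to a prime ideal $\mathfrak{p}$ of $P$; then $h(\varphi(x')) = \mathrm{rank}_{\mathbf{Z}}\,(\mathcal{M}_{F(\mathcal{X}),\varphi(x')})^{\mathrm{gp}}$ by the definition of the height. By the structure theory of log regular local rings (Kato \cite{kato}, Gabber--Ramero \cite{GR}), after completion $A$ decouples into a ``toric part'' governed by $\mathbf{Z}[P]$ and a regular part $t_1,\dots,t_r$ coming from $C_{\mathcal{X},x}$; crucially, the subdivision $\varphi$ and hence the log blow-up $\mathrm{Bl}_\varphi$ affect only the toric part, being \'etale-locally the pullback of the toric morphism $\mathrm{Spec}\,\mathbf{Z}[P'] \to \mathrm{Spec}\,\mathbf{Z}[P]$ attached to $\varphi$ by the formalism of Kato \cite[\S 10]{kato} and Niziol \cite{niziol}.

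Next I would analyse the fibre. Because the regular directions are untouched by $\mathrm{Bl}_\varphi$, forming the scheme-theoretic fibre over $x$ collapses them, so that $\mathrm{Bl}_\varphi^{-1}(x)$ is the base change to $\kappa(x)$ of the fibre of the toric morphism over the torus-orbit point determined by $\mathfrak{p}$. In this local model the stratum $U(x')$ is the torus orbit $O_{\sigma'}$ attached to the cone $\sigma'$ of the subdivided fan corresponding to $x'$, while $U(\varphi(x'))$ is the orbit $O_\sigma$ attached to the cone $\sigma$ of $\varphi(x')$. The hypothesis $\varphi(x') = \pi(x)$ says precisely that $\sigma$ is the smallest cone of $F(\mathcal{X})$ containing $\sigma'$, which is exactly the condition ensuring that $O_{\sigma'}$ maps onto $O_\sigma$ under the toric morphism.

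I would then apply the standard description of the toric morphism induced by a subdivision: since it is induced by the identity on the cocharacter lattice, the restriction $O_{\sigma'} \to O_\sigma$ is the surjection of split tori attached to the quotient map of lattices, hence a homomorphism of tori whose fibres are cosets of a split subtorus of dimension $\dim\sigma - \dim\sigma'$. Intersecting $U(x') = O_{\sigma'}$ with the single fibre $\mathrm{Bl}_\varphi^{-1}(x)$ over the $\kappa(x)$-point $x \in O_\sigma$ therefore yields one split torus over $\kappa(x)$. Matching dimensions via $\dim\sigma = h(\varphi(x'))$ and $\dim\sigma' = h(x')$ gives $U(x') \cap \mathrm{Bl}_\varphi^{-1}(x) \cong \mathbf{G}^{h(\varphi(x')) - h(x')}_{m,\kappa(x)}$, as claimed.

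The main obstacle I expect lies in the reduction step rather than in the toric bookkeeping: one must justify carefully that the \'etale-local toric chart simultaneously computes the log blow-up \emph{and} the logarithmic stratification, and in particular that forming the fs fibre product defining $\mathrm{Bl}_\varphi$ is compatible with passing to the scheme-theoretic fibre over $x$ and intersecting with $U(x')$. Since fs fibre products do not commute with the forgetful functor to schemes, as recalled in the preliminaries, I would need to verify that the non-toric regular factor really splits off as a single reduced point $\mathrm{Spec}\,\kappa(x)$ over $x$, so that no spurious components or nilpotents survive and the fibre is exactly the asserted split torus.
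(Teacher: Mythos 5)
Your proposal is correct in substance, and its toric core is the same as the paper's, but the reduction to that core is organized differently --- and the paper's route makes the obstacle you flag at the end disappear. The paper never completes the local ring and never invokes the structure theorem for log regular rings: it uses the fact that, once a chart $P \to \mathcal{O}_{\mathcal{X}}$ exists locally, Kato's log blow-up along a subdivision is the \emph{ordinary} scheme-theoretic fibre product $\varphi^\star \mathcal{X} = \mathcal{X} \times_{\mathrm{Spec}\,R[P]} \mathrm{Spec}\,R[Q]$, where $Q = P^{\mathrm{gp}} \times_{(P')^{\mathrm{gp}}} P'$; this is already fs and log regular (cf. \cite[Proposition 9.6.14]{GR}), so no saturation step occurs and the fs-versus-scheme fibre product worry is moot for subdivisions of log regular schemes. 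Since the strata are themselves fibre products, $U(\varphi(x')) = \mathcal{X} \times_{\mathrm{Spec}\,R[P]} \mathrm{Spec}\left(R[P]/(\mathfrak{p})\right)_{\mathfrak{p}}$ and $U(x') = \varphi^\star \mathcal{X} \times_{\mathrm{Spec}\,R[Q]} \mathrm{Spec}\left(R[Q]/(\mathfrak{q})\right)_{\mathfrak{q}}$, transitivity of fibre products identifies $U(x') \cap \mathrm{Bl}_\varphi^{-1}(x)$ with the fibre of $\mathrm{Spec}\left(\kappa(x)[Q]/(\mathfrak{q})\right)_{\mathfrak{q}} \to \mathrm{Spec}\left(\kappa(x)[P]/(\mathfrak{p})\right)_{\mathfrak{p}}$ over the $\kappa(x)$-point determined by $x$: the ``regular directions'' are absorbed automatically, with no need to split them off after completion and no risk of spurious components or nilpotents. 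The remaining computation in the paper is your orbit--cone argument written multiplicatively: $\left(\kappa(x)[Q]/(\mathfrak{q})\right)_{\mathfrak{q}} \cong \kappa(x)[(Q\setminus\mathfrak{q})^{\mathrm{gp}}]$, and the cokernel of $(P\setminus\mathfrak{p})^{\mathrm{gp}} \hookrightarrow (Q\setminus\mathfrak{q})^{\mathrm{gp}}$ is free because it embeds in the free group $P^{\mathrm{gp}}/(P\setminus\mathfrak{p})^{\mathrm{gp}}$ --- this is exactly your ``saturated sublattice'' observation --- whence the fibres are split tori, of rank $h(\varphi(x')) - h(x')$ by the same dimension count you give. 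So your approach does work, but if you keep the completion/structure-theorem detour you must still supply the compatibility you flagged (base change the blow-up along the flat map $\mathrm{Spec}\,\widehat{\mathcal{O}}_{\mathcal{X},x} \to \mathcal{X}$ and note the fibre over the closed point is unchanged), whereas the fibre-product bookkeeping does all of this in one stroke.
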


\begin{proof} Fix $x' \in F'$ and $x \in \mathcal{X}$ such that $x \in U(\varphi(x'))$. The statement is local on $\mathcal{X}$. Hence, by shrinking $\mathcal{X}$ if necessary, we may assume that $F(\mathcal{X}) = \mathrm{Spec}\,P$ and $F' = F(\varphi^\star \mathcal{X}) = \mathrm{Spec}\,P'$, where $P$ and $P'$ are sharp fs monoids. 

We can also assume that the natural morphism $P \to \mathcal{M}_\mathcal{X}^\sharp$ lifts to a homomorphism of sheaves $P \to \mathcal{M}_X$ such that the composition $P \to \mathcal{M}_X \to \mathcal{O}_X$ gives a chart for the log structure on $\mathcal{X}$. Define $Q$ by $$Q = P^{\mathrm{gp}} \times_{(P')^{\mathrm{gp}}} P';$$ this is the submonoid of $P^\mathrm{gp}$ consisting of the elements whose image in $(P')^{\mathrm{gp}}$ lies inside $P'$. The obvious map $P^{\text{gp}} \to (P')^{\text{gp}}$ is surjective; $Q$ does not need to be sharp, but by \cite[Lemma 9.6.12]{GR} one has $Q^\sharp \cong (P')^\sharp$. The log blow-up $\varphi^\star \mathcal{X}$ is given by $\varphi^\star \mathcal{X} = \mathcal{X} \times_{\mathrm{Spec}\,R[P]} \mathrm{Spec}\,R[Q]$, cfr. the construction in \cite[Proposition 9.6.14]{GR}. The log structure on $\varphi^\star \mathcal{X}$ is given by the chart $Q \to \mathcal{O}_{\varphi^\star \mathcal{X}}$.  

Let $\psi: P \to Q$ be the natural injection, let $\mathfrak{q}$ be the prime ideal of $Q$ corresponding to $x'$ and let $\mathfrak{p} = \psi^{-1}(\mathfrak{q})$ be the prime ideal of $P$ corresponding to $\varphi(x')$. The stratum $U(\varphi(x'))$ can then be desribed as $$\mathcal{X} \times_{\mathrm{Spec}\,R[P]} \mathrm{Spec}\left(R[P]/(\mathfrak{p})\right)_{\mathfrak{p}},$$ and its Zariski closure $\overline{U(\varphi(x'))}$ is simply $$\mathcal{X} \times_{\mathrm{Spec}\,R[P]} \mathrm{Spec}\,R[P]/(\mathfrak{p});$$ here $(\mathfrak{p})$ denotes the ideal of $R[P]$ generated by the elements of $\mathfrak{p}$, and the subscript $_\mathfrak{p}$ denotes localization with respect to the multiplicative subset of $R[P]/(\mathfrak{p})$ generated by the image of $P \setminus \mathfrak{p}$. Similarly, the stratum $U(x')$ is now equal to $$\varphi^\star \mathcal{X} \times_{\mathrm{Spec}\,R[Q]} \mathrm{Spec}\left(R[Q]/(\mathfrak{q})\right)_{\mathfrak{q}}.$$ 

We have the following pullback diagram:
\begin{center}
\begin{tikzpicture}[auto]
\node (L1) {$U(x')$};
\node (L2) [below= 1.3cm of L1] {$\mathrm{Spec}\left(R[Q]/(\mathfrak{q})\right)_{\mathfrak{q}}$};
\node (M1) [right= 2.5cm  of L1] {$U(\varphi(x'))$};
\node (M2) at (M1 |- L2) {$\mathrm{Spec}\left(R[P]/(\mathfrak{p})\right)_{\mathfrak{p}}$};
\draw[->] (L1) to node {} (M1);
\draw[->] (L2) to node {} (M2);
\draw[->] (L1) to node  {} (L2);
\draw[->] (M1) to node {} (M2);
\end{tikzpicture} \end{center}
and hence, taking fibres over $x \in U(\varphi(x'))$, the pullback diagram 
  
  \begin{center}
\begin{tikzpicture}[auto]
\node (L1) {$U(x') \cap \text{Bl}_\varphi^{-1}(x)$};
\node (L2) [below= 1.3cm of L1] {$\mathrm{Spec}\left(\kappa(x)[Q]/(\mathfrak{q})\right)_{\mathfrak{q}}$};
\node (M1) [right= 2.5cm  of L1] {$\mathrm{Spec}\,\kappa(x) $};
\node (M2) at (M1 |- L2) {$\mathrm{Spec}\left(\kappa(x)[P]/(\mathfrak{p})\right)_{\mathfrak{p}}$};
\draw[->] (L1) to node {} (M1);
\draw[->] (L2) to node {$\star$} (M2);
\draw[->] (L1) to node  {} (L2);
\draw[->] (M1) to node {} (M2);
\end{tikzpicture} \end{center}
Therefore it now suffices to prove that the fibres of $$\mathrm{Spec}\left(\kappa(x)[Q]/(\mathfrak{q})\right)_{\mathfrak{q}} \to  \mathrm{Spec}\left(\kappa(x)[P]/(\mathfrak{p})\right)_{\mathfrak{p}}$$ are (split) tori of the required rank. Notice that $$\kappa(x)[Q]/(\mathfrak{q}) \cong \kappa(x)[Q \setminus \mathfrak{q}]$$ and hence that $$(\kappa(x)[Q]/(\mathfrak{q}))_{\mathfrak{q}} \cong \kappa(x)[(Q \setminus \mathfrak{q})^{\text{gp}}].$$ Similarly, we obtain that $$\left(\kappa(x)[P]/(\mathfrak{p})\right)_{\mathfrak{p}} \cong \kappa(x)[(P \setminus \mathfrak{p})^{\text{gp}}].$$ 

We have a commutative diagram   
\begin{center}
\begin{tikzpicture}[auto]
\node (L1) {$ \left(\kappa(x)[P]/(\mathfrak{p})\right)_{\mathfrak{p}} $};
\node (L2) [below= 1.3cm of L1] {$ \kappa(x)[(P \setminus \mathfrak{p})^{\text{gp}}]$};
\node (M1) [right= 2.5cm  of L1] {$(\kappa(x)[Q]/(\mathfrak{q}))_{\mathfrak{q}}$};
\node (M2) at (M1 |- L2) {$\kappa(x)[(Q \setminus \mathfrak{q})^{\text{gp}}] $};
\draw[->] (L1) to node {} (M1);
\draw[->] (L2) to node {} (M2);
\draw[->] (L1) to node  {} (L2);
\draw[->] (M1) to node {} (M2);
\end{tikzpicture} \end{center}
 in which the vertical maps are the isomorphisms mentioned above, the top horizontal arrow is the one which induces the map $\star$ in the previous diagram and the bottom horizontal arrow is induced by the injective homomorphism $P \setminus \mathfrak{p} \to Q \setminus \mathfrak{q}$ obtained by restricting $\psi: P \to Q$. 
  
  Now $P \setminus \mathfrak{p}$ and $Q \setminus \mathfrak{q}$, being submonoids of fs monoids, are themselves fs; the quotient $(Q \setminus \mathfrak{q})^{\text{gp}}/(P \setminus \mathfrak{p})^{\text{gp}}$ is a subgroup of $P^{\text{gp}}/(P \setminus \mathfrak{p})^{\text{gp}}$, since $Q$ (and hence $Q \setminus \mathfrak{q}$) is a submonoid of $P$. The latter group is a free abelian group of finite type, hence so is its subgroup $(Q \setminus \mathfrak{q})^{\text{gp}}/(P \setminus \mathfrak{p})^{\text{gp}}$. This proves that the fibres of $\star$ are split tori; it remains to compute their rank. We have the equalities \begin{align*} h(\varphi(x')) - h(x') & = \ \dim Q_\mathfrak{q} - \dim P_\mathfrak{p} \\ & = \  \text{rank}_\mathbf{Z}\,(Q_{\mathfrak{q}}^\sharp)^\text{gp} - \text{rank}_\mathbf{Z}\, (P_{\mathfrak{p}}^\sharp)^{\text{gp}}\end{align*} (the last one uses the fact that $P$ and $Q$ are fine); it is now easy to see that $$\text{rank}_\mathbf{Z}\, (P_{\mathfrak{p}}^\sharp)^{\text{gp}} = \text{rank}_\mathbf{Z}\,(P \setminus \mathfrak{p})^\text{gp}$$ and  $$\text{rank}_\mathbf{Z}\, (Q_{\mathfrak{q}}^\sharp)^{\text{gp}} = \text{rank}_\mathbf{Z}\,(Q \setminus \mathfrak{q})^\text{gp},$$ so we are done.
  \end{proof}
  We are now in the position to prove Theorem \ref{sX}.
\begin{proof}[Proof of Theorem \ref{sX}] Choose a subdivision $\varphi: F' \to F(\mathcal{X})$ such that $$\mathcal{M}_{F',x'} \cong \mathbf{N}^{h(x')}$$ for every $x' \in F'$. Let $\text{Bl}_\varphi: \varphi^\star \mathcal{X} \to \mathcal{X}$ be the log blow-up associated with $\varphi$ -- this is a ``resolution of toric singularities \`a la Kato'' \cite[\S 10.4]{kato}. The log scheme $\varphi^\star \mathcal{X}$ satisfies the equivalent conditions of Proposition \ref{verticallogstructure}; hence (\ref{rationalvolume}) applies to $\varphi^\star \mathcal{X}$. Since the Euler characteristic is additive with respect to partitions into locally closed subsets and the Euler characteristic of a torus is $0$, the result follows from Lemma \ref{blowups}.\end{proof}

\section{Proof of the main theorem}

We will now finish the proof of Theorem (\ref{maintheorem}). From (\ref{trace}) and (\ref{rationalvolumeexpression}), one immediately obtains the following expression for the so-called ``error term'' \cite[Definition 6.7]{nic1}, generalizing \cite[Theorem 7.3]{nic1} in the case of curves: we have $$\varepsilon(X) := \sum_{i \geq 0} (-1)^i \mathrm{Tr}(\varphi \mid H^i(X \times_K K^t,\mathbf{Q}_\ell)) - \mathrm{s}(X) = \sum_{\substack{x \in F(\mathcal{X})^{(1)} \\ s(x) = p^r,\ r \geq 1}} \chi(U(x)).$$ This formula is valid for any log regular model $\mathcal{X}$, and $\varepsilon(X)$ does not vanish in general. However, it vanishes if $X$ admits a log smooth model. This is an immediate consequence of the following result, which is interesting in its own right:

\begin{prop} Let $\mathcal{X}$ be a proper $R$-model of $X$ such that $\mathcal{X}^\dagger$ is log smooth over $R^\dagger$. Let $x \in F(\mathcal{X})^{(1)}$ such that $p$ divides $s(x)$. Then $\chi(U(x)) = 0$.\end{prop}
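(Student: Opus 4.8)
The plan is to exploit log smoothness to understand the geometry of the stratum $U(x)$ very explicitly, and then to produce a free group action on $U(x)$ whose quotient map shows that the Euler characteristic must vanish. The crucial point is that under the log smoothness assumption (as opposed to mere log regularity), the characteristic monoid at the relevant height $1$ point has a much more rigid structure: near a point $x \in F(\mathcal{X})^{(1)}$ the chart $\mathbf{N} \to P$ exhibits $P$ as a free monoid (by Lemma \ref{verticallogstructure}, log smoothness over the standard log trait forces $\mathcal{M}_{\mathcal{X},x}^\sharp \cong \mathbf{N}^{h(x)}$ locally, hence $\mathcal{M}_{\mathcal{X}_s,x}^\sharp$ and the associated sheaves are free). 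What one extracts from this is a description of $U(x)$, together with the sheaf of logarithmic $1$-forms restricted to it, which is where Saito's suggestion (acknowledged in the introduction) enters.

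First I would reduce to an explicit local model. Since $x$ has height $1$, the stratum $U(x)$ is, up to the free part $\mathbf{G}_m$-factors coming from the torus $\mathrm{Spec}\,k[P^\times \text{-part}]$, essentially an irreducible component of the special fibre $\mathcal{X}_s$ appearing with multiplicity $s(x)$ in the divisor $\mathcal{X}_s = \sum s(x)\overline{U(x)}$. The hypothesis is that $p \mid s(x)$. The key observation flagged by the author involves the sheaf $\Omega^1_{U(x)/k}$ (or rather a logarithmic version of it): log smoothness provides a locally free sheaf of logarithmic differentials whose formation is compatible with the stratification, and along the component of multiplicity divisible by $p$, the class of this sheaf in $K$-theory — or more precisely the behaviour of a canonical logarithmic derivation — acquires a $p$-divisible structure. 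The plan is to use this to construct a nontrivial action of the group scheme $\mu_{s(x)}$, or of its $p$-part $\mu_p$, on $U(x)$ that is free. Concretely, the multiplicity $s(x)$ enters as the ramification index in the chart, and the equation $\pi = u \prod x_i^{n_i}$ (with the relevant $n_i = s(x)$ divisible by $p$) lets one define a $\mu_{s(x)}$-action by scaling the corresponding coordinate; the verticality and height $1$ condition ensure this action has no fixed points on $U(x)$.

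Once a free $\mu_p$-action (equivalently, since $k$ is algebraically closed of characteristic $p$, a free action of the infinitesimal or étale group scheme whose order is a power of $p$) is in hand, the conclusion is immediate from the multiplicativity of the $\ell$-adic Euler characteristic in fibrations: a free action of a finite group scheme of order $p^r$ exhibits $U(x)$ as the total space of a torsor, forcing $p^r \mid \chi(U(x))$ if the action is étale, but more robustly one uses that a fixed-point-free action of $\mu_p$ (which is connected and unipotent in characteristic $p$) forces $\chi(U(x)) = 0$ directly, by the analogue of the fact that a scheme admitting a free $\mathbf{G}_a$- or $\mu_p$-action has vanishing Euler characteristic. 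The cleanest route is probably to show $U(x)$ fibers over a quotient with fibers that are nontrivial torsors under a connected $p$-group scheme, so that $\chi$ of the fiber is $0$ and hence $\chi(U(x)) = 0$ by multiplicativity.

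The main obstacle I expect is making the group-scheme action both well-defined and genuinely fixed-point-free in a characteristic-$p$ setting, where $\mu_p$ is non-reduced and the usual topological Euler-characteristic arguments for free actions of finite \emph{groups} do not transfer verbatim. One must be careful that it is the logarithmic structure — not a naive coordinate scaling — that produces the action, and that the $p$-part of $s(x)$ (rather than its prime-to-$p$ part, which was already handled by the étale covers of Proposition \ref{prop:degree} and Theorem \ref{lissetamelyramified}) is what survives on the stratum. Isolating exactly the $\mu_p$-worth of symmetry from the full $\mu_{s(x)}$, and verifying the vanishing of $\chi$ via the sheaf of logarithmic differentials rather than a bare free action, is the delicate heart of the argument; this is precisely the "new idea" the introduction promises, and I would expect the logarithmic $1$-forms to furnish a canonical vector field or $p$-closed foliation witnessing the $\mu_p$-action.
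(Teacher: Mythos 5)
There is a genuine gap here, in fact two. First, the group action you propose does not exist in the form you need. Scaling the chart coordinate $x_i$ that cuts out the component $\overline{U(x)}$ (the one whose exponent is $s(x)$ in $\pi = u\prod_j x_j^{n_j}$) is an action that fixes the locus $\{x_i = 0\}$ pointwise; but the stratum $U(x)$ lies \emph{inside} that locus, so the action you construct is trivial on $U(x)$, not free -- verticality and the height $1$ condition cannot rescue this. (It is also only defined chart-locally, with no reason to glue.) Second, even if a free $\mu_p$-action on $U(x)$ were available, the vanishing mechanism you invoke is false in characteristic $p$. A free action of the infinitesimal group scheme $\mu_p$ makes $U(x) \to U(x)/\mu_p$ a finite flat torsor which is a universal homeomorphism (the model case is Frobenius $\mathbf{G}_m \to \mathbf{G}_m$, $t \mapsto t^p$, a $\mu_p$-torsor), and $\ell$-adic Euler characteristics with $\ell \neq p$ are invariant under universal homeomorphisms; so one gets $\chi(U(x)) = \chi(U(x)/\mu_p)$ and no constraint at all. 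The parallel claim for $\mathbf{G}_a$ is equally false: $\mathbf{A}^1$ acts on itself freely by translation and $\chi(\mathbf{A}^1) = 1$. Free actions of \emph{\'etale} $p$-groups would only give divisibility of $\chi$ by $p$, which you acknowledge, and which is not vanishing.

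Your instinct that the logarithmic $1$-forms are the key is correct, but the paper uses them in a different and essentially dual way, with no group action anywhere. Writing $\pi = u f^{s(x)}$ in $\mathcal{O}_{\mathcal{X},x}$ with $f$ a local equation of the component, the paper considers the meromorphic form $\mathrm{dlog}\,u$ on $\overline{U(x)}$: precisely because $p \mid s(x)$, this form is independent of the chosen factorization, hence globally well defined, and it is in fact a holomorphic section of the locally free sheaf $\Omega^1_{\overline{U(x)}}(\mathrm{log}\,\Delta)$, where $\Delta$ is the boundary divisor of the induced log structure. Log smoothness (as opposed to mere log regularity) enters exactly once: the cokernel of $u^{\mathrm{gp}}\colon \mathbf{N}^{\mathrm{gp}} \to P^{\mathrm{gp}}$ has no $p$-torsion, and this is what forces the section $\mathrm{dlog}\,u$ (which corresponds to an explicit element $c \otimes 1$ of $(P\setminus\mathfrak{p})^{\mathrm{gp}} \otimes \mathcal{O}$) to be \emph{nowhere vanishing}: a zero would make $c$, hence $a = s(x)b - c$, divisible by $p$, contradicting the torsion hypothesis. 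A nowhere vanishing section kills the top Chern class of $\Omega^1_{\overline{U(x)}}(\mathrm{log}\,\Delta)$, and the conclusion follows from Saito's ``Lemma 0'', which identifies $\chi(U(x))$ with the degree of exactly that Chern class for a log regular scheme over $k$. So the finishing step is a Chern-class computation on the compactified stratum, not a quotient or torsor argument on the open stratum; the latter cannot work for the reasons above.
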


In the case of curves, we know that cohomological tameness implies logarithmic good reduction, by work of Stix \cite[Theorem 1.2]{stix}. Moreover, Saito's criterion for cohomological tameness of curves \cite{saito} gives a precise description of the irreducible components of the special fibre of a log smooth model for which the multiplicity is divisible by the residual characteristic $p$. Each such component is a copy of $\mathbf{P}^1$, which intersects exactly two other components, both of which have multiplicity prime to $p$. The above result should be seen as a partial generalization of this description.

\begin{proof} Choose $x \in F(\mathcal{X})$ and $y \in \mathcal{X}$ such that $y \in \overline{U(x)}$. Choose a chart \emph{\`a la Kato} (Definition 2.2) for the log structure around $y$, i.e. an \'etale neighbourhood $V = \mathrm{Spec}\,A$ of $y$ and a homomorphism of monoids $\mathbf{N} \to P$ such that \begin{center}
\begin{tikzpicture}[auto]
\node (L1) {$P$};
\node (L2) [below= 1.2cm of L1] {$\mathbf{N}$};
\node (M1) [right= 1.2cm  of L1] {$A$};
\node (M2) at (M1 |- L2) {$R$};
\draw[->] (L1) to node {\footnotesize $\varphi$} (M1);
\draw[->] (L2) to node  {\footnotesize $1 \mapsto \pi$} (M2);
\draw[->] (L2) to node  {\footnotesize $u$} (L1);
\draw[->] (M2) to node {} (M1) {};
\end{tikzpicture} \end{center} commutes, where $P$ is an fs monoid and $u^{\mathrm{gp}}: \mathbf{N}^{\mathrm{gp}} \to P^{\mathrm{gp}}$  has the property that its kernel and the torsion part of its cokernel $C$ are finite groups, the order of which is invertible on $R$. We can safely assume that $P$ is toric (i.e. $P^{\mathrm{gp}}$ is torsion free).

The point $x \in F(\mathcal{X})$ corresponds to a prime ideal $\mathfrak{p} \subseteq P$ and $\overline{U(x)} \cap V$, seen as a reduced, closed subscheme of $V$, can be described as the fibre product $$V \times_{\mathrm{Spec}\,R[P]} \mathrm{Spec}\,R[P \setminus \mathfrak{p}] \cong \mathrm{Spec}\,A/(\mathfrak{p}),$$ where $(\mathfrak{p})$ is the ideal of $A$ generated by $\{\varphi(p) \mid p \in \mathfrak{p}\}$. Now $\overline{U(x)}$ becomes a log scheme for the log structure defined locally by $P \setminus \mathfrak{p} \to A/(\mathfrak{p})$, and this log scheme is log regular by \cite[Proposition 7.2]{kato}. Since $k$ is perfect, it is even log smooth over $k$ (equipped with the trivial log structure). The log structure on $\overline{U(x)}$ is the one induced by the reduced Weil divisor $\Delta$ supported on the union of the strata $\overline{U(x')}$, where $x'$ is a (strict) specialization of $x$ in the monoidal space $F(\mathcal{X})$. 

From now on, assume that $x \in F(\mathcal{X})^{(1)}$ and that $p$ divides $m = s(x)$. In the local ring $\mathcal{O}_{\mathcal{X},x}$, we have $\pi = uf^m$ where $u$ is a unit and $f$ is a local equation of the irreducible component $\overline{U(x)}$ of $\mathcal{X}_s$. We will study the meromorphic one-form $\mathrm{dlog}\,u$ on $\overline{U(x)}$. If $\pi = vg^m$ is another such factorization in $\mathcal{O}_{\mathcal{X},x}$, we have $\mathrm{dlog}\,u = \mathrm{dlog}\,v$ on $\overline{U(x)}$ since the residue field $k$ has characteristic $p$ and $p$ divides $m$. Hence $\mathrm{dlog}\,u$ is a well-defined meromorphic $1$-form on the irreducible component $\overline{U(x)}$. Locally around $y$, it can be described in terms of the log structure, as follows.

\begin{center}
\begin{tikzpicture}[auto]
\node (L2) {$(P \setminus \mathfrak{p})^{\mathrm{gp}}$};
\node (M2) [right= 1.2 cm of L2] {$P^{\mathrm{gp}}$};
\node (M1) [above= 1.2cm  of M2] {$C$};
\node (M3) [below= 1.2cm of M2] {$\mathbf{N}^{\mathrm{gp}}$};
\node (R2) [right= 1.2cm of M2] {$\mathbf{N}^{\mathrm{gp}}$};
\draw[->] (L2) to node {} (M2);
\draw[->>] (M2) to node {\footnotesize $r$} (R2);
\draw[->] (M3) to node {\footnotesize $u^{\mathrm{gp}}$} (M2);
\draw[->>] (M2) to node {} (M1);

\end{tikzpicture} \end{center} 

Let $a = u(1) \in P$, then $\varphi(a) = \pi$ and $r(a) = m \in \mathbf{N}$. Choose $b \in P$ such that $r(b) = 1$, i.e. $\varphi(b)$ is a uniformizer in the discrete valuation ring $\mathcal{O}_{\mathcal{X},x}$. We have $c := mb - a \in (P \setminus \mathfrak{p})^{\mathrm{gp}}$. Taking $u = \varphi(c) \in \mathcal{O}_{\mathcal{X},x}^\times$ and $f = \varphi(b)$, we have $\pi = uf^m$.

The sheaf $\Omega^1_{\overline{U(x)}}(\mathrm{log}\,\Delta)$ is locally free (by log regularity) and $$\left.\Omega^1_{\overline{U(x)}}(\mathrm{log}\,\Delta)\right|_V \cong (P \setminus \mathfrak{p})^{\mathrm{gp}} \otimes \mathcal{O}_{\overline{U(x)}\, \cap \,V}.$$ Now $\mathrm{dlog}\,u$ (when restricted to $\overline{U(x)}$) is a section of $\Omega^1_{\overline{U(x)}}(\mathrm{log}\,\Delta)$ and under the above isomorphism, it corresponds to $c \otimes 1$. This section does not vanish at any point of $\overline{U(x)}$: for that to happen, $c$ would need to be divisible by $p$ in the free abelian group (of finite type) $(P \setminus \mathfrak{p})^{\mathrm{gp}}$. Hence $a = mb - c$ would be divisible by $p$. This means that the cokernel $C$ of $u^{\mathrm{gp}}$ has $p$-torsion, contradicting our assumptions.

The conclusion is that the $1$-form $\mathrm{dlog}\,u$ is a nowhere vanishing (holomorphic) section of the vector bundle  $\Omega^1_{\overline{U(x)}}(\mathrm{log}\,\Delta)$ on the log regular scheme $\overline{U(x)}$. Hence the top Chern class of this vector bundle vanishes. By \cite[``Lemma 0'']{saitooud}, which we state below for the reader's convenience, this implies the result, since $U(x) = \overline{U(x)} \setminus \Delta$. \end{proof}

\begin{lemm}  Let $k$ be an algebraically closed field and let $(X,\mathcal{M}_X)$ be a log regular scheme over $k$ (where $k$ has the trivial log structure). Let $U$ be the locus of triviality of the log structure on $X$.  Then $$\chi(U) = \mathrm{deg}\,c_{X,\mathcal{M}_{X}},$$ where $c_{X,\mathcal{M}_X}$ is the top Chern class of the vector bundle $\Omega^1_X(\mathrm{log}\,\mathcal{M}_X)$ on $X$.
\end{lemm}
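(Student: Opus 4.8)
The plan is to reduce to the case of a strict normal crossings pair and then to prove that case by a characteristic-free Chern class computation. Throughout write $n = \dim X$ and let $\Theta$ be the logarithmic tangent bundle, i.e. the $\mathcal{O}_X$-dual of $\Omega^1_X(\log \mathcal{M}_X)$. Since the stated class satisfies $c_{X,\mathcal{M}_X} = (-1)^n c_n(\Theta)$, I will prove the equivalent, sign-clean identity $\chi(U) = \deg c_n(\Theta)$; the overall sign $(-1)^n$ is immaterial for the application in the preceding Proposition, where $c_{X,\mathcal{M}_X}$ vanishes because it is the obstruction to a nowhere-vanishing section.

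First I would reduce to the strict normal crossings situation. By resolution of toric singularities à la Kato \cite[\S 10.4]{kato} --- exactly the subdivision of the fan used in the proof of Theorem \ref{sX} --- there is a proper log blow-up $h \colon X' \to X$ with $X'$ satisfying the equivalent conditions of Lemma \ref{verticallogstructure}, so that $X'$ is regular with strict normal crossings boundary. Two features of $h$ are decisive: it is an isomorphism over the open stratum $U$ (a subdivision never refines the trivial cone), so that $h^{-1}(U)$ is the open stratum $U'$ of $X'$ and $\chi(U') = \chi(U)$; and it is log étale, so that the canonical map $h^\star \Omega^1_X(\log \mathcal{M}_X) \to \Omega^1_{X'}(\log \mathcal{M}_{X'})$ is an isomorphism. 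Consequently $c_n(\Theta') = h^\star c_n(\Theta)$, and since $h$ is a proper birational morphism with $h_\star[X'] = [X]$, the projection formula gives $\deg c_n(\Theta') = \deg c_n(\Theta)$. It therefore suffices to treat the strict normal crossings case.

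So assume $X$ is smooth and proper over $k$ with strict normal crossings boundary $D = \sum_{i=1}^m D_i$ and $U = X \setminus D$. Iterating the residue sequence $0 \to \Omega^1_X(\log(D - D_i)) \to \Omega^1_X(\log D) \to (\iota_{D_i})_\star \mathcal{O}_{D_i} \to 0$ and dualizing, the total Chern class satisfies $c(\Theta) = c(T_X)\prod_{i=1}^m (1 + [D_i])^{-1}$. I would then prove, by a double induction on $n$ and on $m$, the identity
\[ \deg c_n(\Theta) = \sum_{I \subseteq \{1,\dots,m\}} (-1)^{|I|}\,\chi(D_I), \qquad D_I = \textstyle\bigcap_{i \in I} D_i \ \ (D_\emptyset = X). \]
The base case $m=0$ is the classical Gauss--Bonnet formula $\chi(X) = \deg c_n(T_X)$ (self-intersection of the diagonal), valid in any characteristic. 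For the inductive step one uses the identity $(1+[D_m])^{-1} = 1 - [D_m](1+[D_m])^{-1}$; the projection formula along $\iota_{D_m}$, together with the adjunction relation $\iota_{D_m}^\star c(T_X) \cdot (1 + \iota_{D_m}^\star[D_m])^{-1} = c(T_{D_m})$, identifies the correction term with the analogous expression for the strict normal crossings pair $(D_m, \sum_{i<m} D_i \cap D_m)$, to which the inductive hypothesis in dimension $n-1$ applies; the bookkeeping then reassembles the two contributions into the full sum over $I \subseteq \{1,\dots,m\}$. Finally, inclusion--exclusion for the additive $\ell$-adic Euler characteristic gives $\sum_I (-1)^{|I|}\chi(D_I) = \chi(U)$, closing the argument.

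The computational heart is the inductive identity of the previous paragraph, but this is routine once the residue sequences, adjunction and the projection formula are in place; crucially, every ingredient (Gauss--Bonnet as self-intersection of the diagonal, additivity and inclusion--exclusion for $\chi$) is available over an arbitrary algebraically closed field, so that \emph{no} de Rham degeneration or characteristic-zero input is required --- this is the main advantage of the intersection-theoretic route over a logarithmic de Rham computation in residual characteristic $p$. The step I expect to require the most care is the reduction to strict normal crossings: one must verify that the chosen log blow-up is genuinely an isomorphism over $U$ and that log étaleness yields $h^\star \Omega^1_X(\log \mathcal{M}_X) \cong \Omega^1_{X'}(\log \mathcal{M}_{X'})$, making the logarithmic Chern number a log-birational invariant. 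Granting this, the equality $\chi(U) = \deg c_n(\Theta)$ established on $X'$ transports back to $X$ verbatim.
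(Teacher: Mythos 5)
Your proposal is correct and takes essentially the same route as the paper's (very terse) proof: reduce to the strict normal crossings case via a proper, log \'etale log blow-up that is an isomorphism over $U$, then verify the formula there by an explicit Chern-class computation --- your residue-sequence induction combined with Gauss--Bonnet and inclusion--exclusion is exactly the ``well-known'' computation the paper leaves to the reader. Your sign remark is also accurate: the clean identity is $\chi(U) = \deg c_n(\Theta)$ for the log tangent bundle $\Theta$, which differs from the statement as printed by $(-1)^{\dim X}$, and this discrepancy is indeed immaterial for the application, where the top Chern class vanishes outright because of the nowhere-vanishing section.
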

To prove this formula, one can use log blow-ups to reduce the statement to the case of a smooth $k$-variety equipped with a divisorial log structure, induced by a strict normal crossings divisor. In that setting, the formula is rather well-known and can be checked by an explicit computation.

\end{document}